\numberwithin{equation}{section}
\theoremstyle{plain}
\newtheorem{theorem}{Theorem}[section]
\newtheorem{lemma}[theorem]{Lemma}
\newtheorem{proposition}[theorem]{Proposition}
\newtheorem{corollary}[theorem]{Corollary}
\newtheorem{assume}[theorem]{Assumption}
\theoremstyle{definition}
\newtheorem{remark}[theorem]{Remark}
\def\BE{\mathbb E}
\def\BN{\mathbb N}
\def\BR{\mathbb R}
\def\cE{\mathcal E}
\def\rd{\mathrm d}
\def\rdiv{\mathrm{div}}
\def\e{\mathrm e}
\def\supp{\mathrm{supp}}
\def\Ga{\Gamma}
\def\Om{\Omega}
\def\al{\alpha}
\def\ga{\gamma}
\def\de{\delta}
\def\ep{\epsilon}
\def\ve{\varepsilon}
\def\te{\theta}
\def\ze{\zeta}
\def\la{\lambda}
\def\si{\sigma}
\def\vp{\varphi}
\def\om{\omega}
\def\f{\frac}
\def\nb{\nabla}
\def\ov{\overline}
\def\pa{\partial}
\def\tri{\triangle}
\def\wt{\widetilde}
\title[Exponential stability for an infinite memory wave equation]
{Exponential stability for an infinite memory\\
wave equation with frictional damping\\
and logarithmic nonlinear terms}
\author[Qingqing Peng]{Qingqing Peng$^{1,2}$}
\thanks{$^1$School of Mathematics and Statistics \& Hubei Key Laboratory of Engineering Modeling and Scientific Computing, Huazhong University of Science and Technology, Wuhan 430074, China.}
\author[Yikan Liu]{Yikan Liu$^{2,*}$}
\thanks{$^2$Department of Mathematics, Kyoto University,
Kitashirakawa-Oiwakecho, Sakyo-ku, Kyoto 606-8502, Japan.\\
\indent$^*$Corresponding author.\\
\indent E-mail addresses: {\tt pengqq@hust.edu.cn} (Q. Peng), {\tt liu.yikan.8z@kyoto-u.ac.jp} (Y. Liu)}
\keywords{Exponential stability, logarithmic nonlinearity, local damping, infinite memory, acoustic boundary conditions.}
\begin{document}

\maketitle

\begin{abstract}
This article is concerned with the energy decay of an infinite memory wave equation with a logarithmic nonlinear term and a frictional damping term. The problem is formulated in a bounded domain in $\BR^d$ ($d\ge3$) with a smooth boundary, on which we prescribe a mixed boundary condition of the Dirichlet and the acoustic types. We establish an exponential decay result for the energy with a general material density $\rho(x)$ under certain assumptions on the involved coefficients. The proof is based on a contradiction argument, the multiplier method and some microlocal analysis techniques. In addition, if $\rho(x)$ takes a special form, our result even holds without the damping effect, that is, the infinite memory effect alone is strong enough to guarantee the exponential stability of the system.
\end{abstract}

\section{Introduction}

Let $\Om\subset\BR^d$ ($d\in\BN:=\{1,2,\dots\}$) be a bounded and connected domain with a smooth boundary $\pa\Om$, which is divided into two parts as $\pa\Om=\Ga_0\cup\Ga_1$ such that $\Ga_0\cap\Ga_1=\emptyset$. The main purpose of this work is to investigate the energy decay of the following initial-boundary value problem for a nonlinear wave equation with infinite memory, frictional damping and logarithmic nonlinear terms:
\begin{equation}\label{1.1}
\left\{\begin{alignedat}{2}
& \begin{aligned}
& \rho(x)u_{tt}-\tri u+\int_0^\infty g(s)\,\rdiv(a(x)\nb u(t-s))\,\rd s\\
& \quad+b(x)h(u_t)=f(u):=|u|^{p-2}u\log|u|
\end{aligned} & \quad &\mbox{in }\Om\times(0,\infty),\\
& u=0 & \quad &\mbox{on }\Ga_0\times(0,\infty),\\
& \begin{aligned}
& \pa_\nu u-\int_0^\infty g(s)a(x)\pa_\nu u(t-s)\,\rd s=y_t,\\
& u_t+r(x)y_t+q(x)y=0
\end{aligned} & \quad &\mbox{on }\Ga_1\times(0,\infty),\\
& u(\,\cdot\,,-s)=u^0(\,\cdot\,,s),\ u_t(\,\cdot\,,0)=u^1 & \quad &\mbox{in }\Om,\ s\ge0,\\
& y=y^0 & \quad &\mbox{on }\Ga_1\times\{0\}.
\end{alignedat}\right.
\end{equation}
Here $\pa_\nu:=\nu\cdot\nb$ denotes the normal derivative and $\nu=\nu(x)$ is the unit outward normal vector at $x\in\pa\Om$. The function $\rho(x)$ represents the material density, and the term $b(x)h(u_t)$ stands for the possibly nonlinear damping effect. The nonlocal term
\begin{equation}\label{eq-nonlocal}
\int_0^\infty g(s)\,\rdiv(a(x)\nb u(t-s))\,\rd s
\end{equation}
reflects the memory effect from $t=-\infty$ to the current moment, where $g(t)$ represents the memory kernel. The appearance of \eqref{eq-nonlocal} usually models the dynamics of viscoelastic materials, which has gathered increasing popularity in recent years (see e.g.\! \cite{BDES18,KR22}). For the solvability of \eqref{1.1}, the initial displacement is given as a function $u^0$ defined in $\Om\times(-\infty,0]$ instead of merely in $\Om\times\{0\}$. The conditions on $\rho(x),b(x),h(s),g(t)$ and other coefficients involved in \eqref{1.1} will be specified later in Assumption \ref{assum1}. Throughout this article, we assume that the spatial dimensions $d$ and the exponent $p$ in the nonlinear term $f(u)$ satisfy
\begin{equation}\label{1.2}
d\ge3,\quad2<p<\f{2(d-1)}{d-2}.
\end{equation}
Finally, the boundary condition in \eqref{1.1} is a mixture of the homogeneous Dirichlet and the so-called acoustic boundary conditions.

The study of decay properties of solutions to wave equations with logarithmic nonlinearities has a long history due to their abundant applications in various research fields such as nuclear physics, optics and geophysics. In the absence of the nonlocal term \eqref{eq-nonlocal}, wave equations with logarithmic nonlinearities were introduced in Bialynicki-Birula and Mycielski  \cite{7,8}, which studied stable and localized solutions for $d=1$. Later, Cazenave and Haraux \cite{11} established the unique existence of solutions to Cauchy problems for $d=3$. In \cite{5}, Bartkowski and G\'orka obtained the existence of classical solutions and investigated weak solutions for the corresponding Cauchy problem for $d=1$. Further, G\'orka \cite{17} established the global existence of weak solutions to the initial value problem with $(u^0,u^1)\in H_0^1(\Om)\times L^2(\Om)$ for $d=1$ by using some compactness method. Recently, by the potential well method, the existence of global solutions to Cauchy problems was proved by Ye \cite{33}, where the exponential decay of global solutions was also obtained by introducing an appropriate Lyapunov function. Meanwhile, the blow-up of solutions in the unstable set was also verified. For more existing results on stability and blow-up of wave equations with logarithmic nonlinearities, see \cite{13,21,pe1,pe2} and the references therein.

Regarding the existence of both frictional damping and memory effects, there is also plenty literature. Cavalcanti and Oquendo \cite{c1} considered a model with two different dissipation effective only in a part of the domain, but the sum of the dissipation is effective in the whole domain. More precisely, they investigated
\[
u_{tt}-k_0\tri u+\int_0^t g(t-s)\,\rdiv(a(x)\nb u(s))\,\rd s+b(x)h(u_t)+f_1(u)=0
\]
and assumed the existence of a constant $\de>0$ such that $a+b\ge\de$ in $\Om$. Under this assumption, the authors showed the polynomial decay of the solution by assuming that the kernel $g$ decays polynomially. In Cavalcanti et al.\! \cite{c2}, the authors further considered viscoelastic wave equations with frictional damping on a compact Riemannian manifold with a boundary. Under the similar assumption on $a,b$ as above, they obtained decay results in a rather general setting with mild assumptions. Later, Cavalanti et al.\! \cite{c3} studied
\[
\rho(x)u_{tt}-\tri u+\int_0^\infty g(t-s)\,\rdiv(a(x)\nb u(s))\,\rd s+b(x)u_t=0,
\]
where $b\ge0$ acting on a region $A$ where $a=0$ in $A$. For a general material density $\rho$, the authors discussed the exponential decay of the system by combining microlocal analysis tools from Burq and G\'erard \cite{b1} and Koch and Tataru \cite{k1} when the kernel $g$ decays exponentially.  Moreover, if the material density is suitably chosen, it is even possible to remove the frictional damping $b(x)u_t$, that is, the nonlocal effect is strong enough to assure the exponential stability of the system. As relevant works on linear viscoelasticity with vector-valued solutions and anisotropic relaxation tensors, we refer to \cite{NO16,HLN23} where similar exponential decay was demonstrated. We also mention that viscoelastic systems are closely related to a class of time-fractional partial differential equations with polynomially decaying kernels $g$, whose long-time asymptotic behavior has also been studied extensively in the last decade (e.g.\! in \cite{xin,LLY15,HL23}).

On the other hand, almost all papers mentioned above are restricted to the Dirichlet boundary condition. Concerning wave equations with acoustic boundary conditions, Graber and Said-Houari \cite{g1} studied the semilinear problem with the porous acoustic boundary condition and proved the unique existence of local solutions by the semigroup theory. Imposing some restrictions on the source terms, they also proved that local solutions can be extended to global ones. In addition, stability and blow-up issues were also discussed. Liu and Sun \cite{36} considered the general decay for weak viscoelastic equations with acoustic boundary conditions under some conditions on the kernel $g$. For more stability results on wave equations with acoustic boundary conditions, see \cite{xin5,xin3}. In the context of acoustic boundary conditions, equations involving a damping term was first studied in Vicente and Frota \cite{v1} and later in Gao et al.\! \cite{g2}. In \cite{v1}, the authors investigated the uniform stabilization of nonlinear wave equations where $a(x)$ is positive in a neighborhood of a sub-boundary. Later, Gao et. al. \cite{g2} proved the uniform decay rates for damped wave equations with nonlinear acoustic boundary conditions by constructing an appropriate Lyapunov function, which generalized the results in \cite{v1}. Recently, Cavalcanti et al.\! \cite{c4} studied the local decay rates of the energy associated with a semilinear wave equation in an inhomogeneous medium with frictional damping and the acoustic boundary condition as follows:
\[
\begin{cases}
\rho(x)u_{tt}-\rdiv(K(x)\nb u)+a(x)h(u_t)+f_2(u)=0 &\mbox{in }\Om\times(0,\infty),\\
u=0 &\mbox{on }\Ga_0\times(0,\infty),\\
\begin{aligned}
& \pa_\nu^K u:=K(x)\nb u\cdot\nu=z_t,\\
& r_1(x)z_{tt}+r_2(x)z+g_1(z)+g_2(z_t)=-u_t
\end{aligned} &\mbox{on }\Ga_1\times(0,\infty),\\
u=u^0,\ u_t=u^1 &\mbox{in }\Om\times\{0\},\\
z=z^0 &\mbox{on }\Ga_1\times\{0\}.
\end{cases}
\]
Assuming that the damping is only active in a neighborhood of $\pa\Om$ and the material density $\rho$ is suitably chosen, the authors established the energy decay results by using microlocal analysis tools. What is more, for a general $\rho$, the authors proved the same result provided that the damping acts on a mesh of $\Om$.

Motivated by the above researches, in this article we are interested in the exponential stability of problem \eqref{1.1}. The main target of this paper is to generalize the result in \cite{c3,c4} in the following three directions.
\begin{enumerate}
\item The major nonlinearity in the governing equation of  \eqref{1.1} is logarithmic, which is technically more difficult than that in \cite{c3,c4}.
\item Compared with \cite{c3}, we consider the more general acoustic boundary condition. We construct auxiliary functions and employ the multiplier method to deal with the difficulties resulting from the acoustic boundary condition.
\item Compared with \cite{c4}, we include the infinite memory term into the governing equation. For a general material density $\rho(x)$, we can get rid of the damping region in a mesh like \cite{c4} and weaken the conditions on $b(x)$. In addition, the damping term $b(x)h(u_t)$ is removable if the $\rho$ is chosen suitably.
\end{enumerate}

The remainder of this paper is structured as follows. In Section 2, we prepare several lemmas including the local and global existence results of solutions as well as some conclusions about microlocal analysis. Then in Section 3, we state the main results concerning problem \eqref{1.1}. In Section 4, we provide the proof of the main theorem by using microlocal analysis tools, the multiplier method and a contradiction argument.

\section{Preliminary and Some Lemmas}

In this section, we fix notations and introduce some basic definitions, important lemmas and function spaces for the sake of the statements and proofs of our main results.

To simplify notations, we denote the norm of the Lebesgue space $L^p(\Om)$ ($1\le p\le\infty$) by $\|\cdot\|_p$, and the inner product of $L^2(\Om)$ by $(\,\cdot\,,\,\cdot\,)$. Similarly, we denote the norm and the inner product of $L^2(\Ga_1)$ by $\|\cdot\|_{\Ga_1}$ and $(\,\cdot\,,\,\cdot\,)_{\Ga_1}$, respectively. In the sequel, we abbreviate e.g.\! $u(x,t)=u(t)$, $v(x)=v$, $g(t)=g$ if there is no fear of confusion.

\begin{lemma}[see \cite{so1}]\label{lemma3}
Let $p\in[2,2_*]$ with $2_*=\f{2d}{d-2}$. Then there is an optimal constant $B_p>0$ such that
\[
\|v\|_p\le B_p\|\nb v\|_2,\quad\forall\,v\in H_{\Ga_0}^1(\Om),
\]
where
\[
H_{\Ga_0}^1(\Om):=\{v\in H^1(\Om)\mid v|_{\Ga_0}=0\}.
\]
\end{lemma}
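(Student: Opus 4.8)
The plan is to derive the stated inequality by combining a Poincar\'e-type inequality on $H_{\Ga_0}^1(\Om)$ with the classical Sobolev embedding, and then to identify the optimal constant through a variational characterization. Throughout I use that $\Ga_0$ carries positive $(d-1)$-dimensional surface measure, which is implicit in the Dirichlet part of the boundary condition in \eqref{1.1}; this is precisely what makes $\|\nb\,\cdot\,\|_2$ an honest norm on $H_{\Ga_0}^1(\Om)$ rather than a mere seminorm.

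First I would establish the Poincar\'e inequality
\[
\|v\|_2\le C_0\|\nb v\|_2,\quad\forall\,v\in H_{\Ga_0}^1(\Om),
\]
by a compactness–contradiction argument. If it failed, there would exist a sequence $(v_n)$ in $H_{\Ga_0}^1(\Om)$ with $\|v_n\|_2=1$ and $\|\nb v_n\|_2\to0$. Then $(v_n)$ is bounded in $H^1(\Om)$, so by the Rellich–Kondrachov theorem a subsequence converges strongly in $L^2(\Om)$ to some $v$ with $\|v\|_2=1$; since $\nb v_n\to0$, the convergence is in fact in $H^1(\Om)$ and $\nb v=0$, forcing $v$ to be constant on the connected domain $\Om$. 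Continuity of the trace operator then gives $v|_{\Ga_0}=0$, and because $\Ga_0$ has positive measure the constant must vanish, contradicting $\|v\|_2=1$. This step is the main obstacle: everything downstream is standard, whereas the Poincar\'e inequality is exactly where the geometry of $\Ga_0$ and the connectedness of $\Om$ enter.

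With Poincar\'e in hand, the full norm satisfies $\|v\|_{H^1}^2=\|v\|_2^2+\|\nb v\|_2^2\le(C_0^2+1)\|\nb v\|_2^2$, so $\|\nb\,\cdot\,\|_2$ is equivalent to $\|\cdot\|_{H^1}$ on $H_{\Ga_0}^1(\Om)$. Invoking the standard Sobolev embedding $H^1(\Om)\hookrightarrow L^p(\Om)$, valid for $p\in[2,2_*]$ on the bounded smooth domain $\Om$, I obtain a constant $C_S=C_S(p,\Om)$ with $\|v\|_p\le C_S\|v\|_{H^1}$. Chaining the two estimates yields $\|v\|_p\le C_S\sqrt{C_0^2+1}\,\|\nb v\|_2$ for all $v\in H_{\Ga_0}^1(\Om)$, which is the desired inequality with a finite constant.

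Finally, to extract the \emph{optimal} constant I would set
\[
B_p:=\sup\bigl\{\|v\|_p:v\in H_{\Ga_0}^1(\Om),\ \|\nb v\|_2=1\bigr\},
\]
which is finite by the inequality just proved and optimal by construction. For $2\le p<2_*$ the embedding $H_{\Ga_0}^1(\Om)\hookrightarrow L^p(\Om)$ is compact, so a maximizing sequence has a subsequence converging weakly in $H^1(\Om)$ and strongly in $L^p(\Om)$; weak lower semicontinuity of $\|\nb\,\cdot\,\|_2$ then shows the supremum is attained, confirming that $B_p$ is genuinely optimal. At the critical exponent $p=2_*$ the supremum stays finite but need not be attained; since the application in \eqref{1.2} only requires $p<\f{2(d-1)}{d-2}<2_*$, this subtlety does not affect our use of the lemma.
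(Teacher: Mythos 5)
Your proposal is correct. Note, however, that the paper offers no proof of this lemma at all: it is quoted verbatim from the reference \cite{so1} (Adams--Fournier), so there is no ``paper's proof'' to compare against line by line. Your argument is the standard one that such a reference would supply: (i) a Poincar\'e inequality on $H_{\Ga_0}^1(\Om)$ obtained by the compactness--contradiction argument (Rellich--Kondrachov, connectedness of $\Om$, continuity of the trace, and positivity of the surface measure of $\Ga_0$), (ii) the resulting equivalence of $\|\nb\,\cdot\,\|_2$ with the full $H^1$ norm on this subspace, (iii) the Sobolev embedding $H^1(\Om)\hookrightarrow L^p(\Om)$ for $2\le p\le 2_*$, and (iv) the variational definition $B_p=\sup\{\|v\|_p:\|\nb v\|_2=1\}$, which is finite by (iii) and optimal by construction. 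All four steps are sound; in particular you correctly flag the one hypothesis the paper leaves implicit, namely that $\Ga_0$ has positive $(d-1)$-dimensional measure (without it the lemma is false, since constants lie in $H_{\Ga_0}^1(\Om)$), and you correctly observe that attainment of the supremum is irrelevant to the statement, so the critical case $p=2_*$ causes no trouble. One small remark: for the weak-limit argument in (iv) you implicitly use that $H_{\Ga_0}^1(\Om)$ is weakly closed in $H^1(\Om)$, which follows from its being a closed subspace (trace continuity); this is worth one sentence but is not a gap.
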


We know that $H_{\Ga_0}^1(\Om)$ is a Sobolev space and we denote its norm by $\|\cdot\|_{\Ga_0}$. Next, we give some assumptions on the coefficients involved in problem \eqref{1.1}.

\begin{assume}\label{assum1}
The functions $\rho(x),a(x),g(t),b(x),r(x),q(x),h(s)$ satisfy the followings.
\begin{enumerate}
\item The function $\rho\in C^\infty(\Om)\cap C(\ov\Om)$ is strictly positive on $\ov\Om$.
\item The function $a\in C^\infty(\Om)\cap C(\ov\Om)$ is non-negative and $A:=\{x\in\Om\mid a(x)=0\}$ is a closed connected subset of $\Om$.
\item The function $g\in L^1(0,\infty)\cap C^1([0,\infty))$ is positive, non-increasing and satisfies
\[
\ell:=1-g_0\| a\|_\infty>0,\quad g_0:=\int_0^\infty g\,\rd t.
\]
Moreover, there exists a constant $\xi>0$ such that
\begin{equation}\label{xx2.2}
g'(t)\le-\xi\,g(t),\quad\forall\,t>0.
\end{equation}
\item The function $b\in L^\infty(\Om)$ is non-negative and there exists a constant $b_0>0$ such that
\[
b\ge b_0>0\quad\mbox{a.e.\! in }A.
\]
\item There exist constants $r_1>r_0>0$ and $q_1>q_0>0$ such that
\[
r_0\le r\le r_1,\quad q_0\le q\le q_1\quad\mbox{in }\Om.
\]
\item The function $h\in C^1(\BR)$ is non-decreasing and satisfies
\[
h(s)s>0,\quad\forall\,s\ne0.
\]
Moreover, there exist constants $h_2>h_1>0$ such that
\[
h_1s^2\le h(s)s\le h_2s^2,\quad\forall\,s\in\BR.
\]
\end{enumerate}
\end{assume}

\begin{remark}
The assumption \eqref{xx2.2} means that the decay rate of the kernel function $g$ is no less than exponential, for example, we can take $g(t)=\e^{-\xi t}$ with a positive constant $\xi$. Meanwhile, Assumption \ref{assum1}(vi) asserts that the function $h$ is approximately linear.
\end{remark}

Now we introduce several Hilbert spaces based on the functions $\rho,a,g$ satisfying Assumption \ref{assum1}. First, we introduce a $\rho$-weighted $L^2$ space
\[
L_\rho^2(\Om):=\left\{v:\Om\longrightarrow\BR\mid\int_\Om\rho|v|^2\,\rd x<\infty\right\}
\]
endowed with the norm
\[
\|v\|_\rho:=\left(\int_\Om\rho|v|^2\,\rd x\right)^{1/2}.
\]
Next, we define an $a$-weighted Sobolev space
\[
H_a^1(\Om):=\left\{v\in L^2(\Om)\mid\int_\Om a|\nb v|^2\,\rd x<\infty,\ v|_{\Ga_0}=0\right\}
\]
with the norm given by
\[
\|v\|_{H_a^1}:=\left(\int_\Om a|\nb v|^2\,\rd x\right)^{1/2}.
\]
Similarly, we define a $g$-weighted $L^2$ space
\[
L_g^2(\BR_+;H_a^1):=\left\{\eta:(0,\infty)\longrightarrow H_a^1(\Om)\mid\int_0^\infty g(t)\|\eta(t)\|_{H_a^1}^2\,\rd t<\infty\right\}
\]
endowed with the norm
\[
\|\eta\|_{L_g^2(\BR_+;H_a^1(\Om))}:=\left(\int_0^\infty g(t)\|\eta(t)\|_{H_a^1}^2\,\rd t\right)^{1/2}.
\]
In order to approach the existence result of problem \eqref{1.1}, we further define the following phase space
\[
\mathcal{H}:=H_{\Ga_0}^1(\Om)\times L_\rho^2(\Om)\times L_g^2(\BR_+;H_a^1)
\]
endowed with the norm
\[
\|(v,w,\eta)\|_{\mathcal{H}}:=\left(\|v\|_{\Ga_0}^2+\|w\|_\rho^2+\|\eta\|_{L_g^2(\BR_+;H_a^1)}^2\right)^{1/2}.
\]
Finally, let us introduce a function $\eta^t$ corresponding with the relative displacement history as
\[
\eta^t(x,s):=u(x,t)-u(x,t-s),\quad x\in\Om,\ t\ge0,\ s>0.
\]

In order to state our main result, we introduce the following energy functional associated with problem \eqref{1.1}:
\begin{align}
E(t) & :=\f12\int_\Om\rho|u_t(t)|^2\,\rd x+\f12\int_\Om k|\nb u(t)|^2\,\rd x+\f12\int_0^\infty g(s)\int_\Om a|\nb\eta^t(s)|^2\,\rd x\rd s\nonumber\\
& \quad\:\,-\int_\Om\f{|u(t)|^p\log|u(t)|}p\,\rd x+\int_\Om\f{|u(t)|^p}{p^2}\,\rd x+\f12\int_{\Ga_1}q|y(t)|^2\,\rd S,\label{x2.4}
\end{align}
where
\begin{equation}\label{eq-def-k}
k(x):=1-g_0a(x),\quad g_0:=\int_0^\infty g(t)\,\rd t.
\end{equation}

We first establish a lower estimate of $E(t)$.

\begin{lemma}\label{lemma 2.6}
Let Assumption $\ref{assum1}$ hold and $p$ satisfy \eqref{1.2}. We choose $\ve>0$ sufficiently small such that $p+\ve<2_*=\f{2d}{d-2}$ and define a function
\[
G(\la):=\f12\la^2-\f{B^{p+\ve}}{\e\,\ve p}\la^{p+\ve},\quad\la\ge0,
\]
where $B:=B_{p+\ve}\ell^{-1/2}$ and $B_{p+\ve}>0$ is the optimal constant defined in Lemma $\ref{lemma3}$. Then there holds $E(t)\ge G(\la(t))$ for any $t\ge0$ with $\la(t):=\sqrt\ell\,\|\nb u(t)\|_2$.
\end{lemma}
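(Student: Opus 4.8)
The plan is to discard the manifestly non-negative contributions to $E(t)$ and thereby reduce the claim to a one-dimensional comparison between $E(t)$ and $G(\la(t))$. Inspecting \eqref{x2.4}, the kinetic term $\f12\int_\Om\rho|u_t|^2\,\rd x$, the memory term $\f12\int_0^\infty g(s)\int_\Om a|\nb\eta^t(s)|^2\,\rd x\rd s$, the boundary term $\f12\int_{\Ga_1}q|y|^2\,\rd S$ and the term $\int_\Om\f{|u|^p}{p^2}\,\rd x$ are all $\ge0$ in view of $\rho,a,q,g\ge0$ from Assumption \ref{assum1}. Dropping them leaves
\[
E(t)\ge\f12\int_\Om k|\nb u(t)|^2\,\rd x-\f1p\int_\Om|u(t)|^p\log|u(t)|\,\rd x,
\]
so it remains to control these two surviving terms, the second of which is sign-indefinite (positive where $|u|>1$) and is the source of the negative part of $G$.

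First I would treat the gradient term. By \eqref{eq-def-k} and Assumption \ref{assum1}(iii), one has $k(x)=1-g_0a(x)\ge1-g_0\|a\|_\infty=\ell>0$ pointwise, whence
\[
\f12\int_\Om k|\nb u|^2\,\rd x\ge\f\ell2\|\nb u\|_2^2=\f12\la(t)^2
\]
directly from the definition $\la(t)=\sqrt\ell\,\|\nb u(t)\|_2$. This reproduces the leading term $\f12\la^2$ of $G$ exactly.

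The crucial step, and the main obstacle, is to bound the logarithmic term from above by a power of $\la(t)$. Here I would invoke the elementary inequality $\log\si\le\f{\si^\ve}{\e\ve}$, valid for all $\si>0$ and any $\ve>0$; indeed the function $\si\mapsto\f{\si^\ve}{\e\ve}-\log\si$ attains its global minimum $0$ at $\si=\e^{1/\ve}$. Applying it with $\si=|u|$ gives $|u|^p\log|u|\le\f1{\e\ve}|u|^{p+\ve}$ wherever $u\ne0$ (both sides vanishing as $u\to0$), so that
\[
\f1p\int_\Om|u|^p\log|u|\,\rd x\le\f1{\e\ve p}\|u\|_{p+\ve}^{p+\ve}.
\]
Since $\ve$ was chosen with $p+\ve<2_*$, Lemma \ref{lemma3} yields $\|u\|_{p+\ve}\le B_{p+\ve}\|\nb u\|_2=B_{p+\ve}\ell^{-1/2}\la(t)=B\la(t)$, hence $\|u\|_{p+\ve}^{p+\ve}\le B^{p+\ve}\la(t)^{p+\ve}$. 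Combining the three estimates gives $E(t)\ge\f12\la(t)^2-\f{B^{p+\ve}}{\e\ve p}\la(t)^{p+\ve}=G(\la(t))$, as desired. The only genuinely non-trivial ingredient is the logarithmic inequality, which converts the sign-indefinite nonlinear term into a controllable power-type term amenable to the Sobolev embedding of Lemma \ref{lemma3}; the restriction $p+\ve<2_*$ is precisely what makes that embedding available.
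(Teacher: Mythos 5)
Your proof is correct and follows essentially the same route as the paper: drop the manifestly nonnegative terms of $E(t)$, bound the logarithmic term via $\log\si\le\si^\ve/(\e\,\ve)$, and apply the Sobolev constant $B_{p+\ve}$ from Lemma \ref{lemma3}. The only cosmetic difference is that you use this inequality globally on $\Om$ (justified by your minimization argument), whereas the paper first splits $\Om$ into $\{|v|<1\}$ and $\{|v|\ge1\}$ and discards the negative contribution before applying the same inequality on the second set.
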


\begin{proof}
For any $v\in H_{\Ga_0}^1(\Om)$, we split $\Om$ into two parts according to the value of $v$ as
\begin{equation}\label{eq-Omega}
\Om_1:=\{x\in\Om\mid|v(x)|<1\},\quad\Om_2:=\{x\in\Om\mid|v(x)|\ge1\}.
\end{equation}
Then we employ the Sobolev embedding theorem and the definition of $B_{p+\ve}$ to estimate
\begin{align}
\int_\Om\f{|v|^p\log|v|}p\,\rd x & =\left(\int_{\Om_1}+\int_{\Om_2}\right)\f{|v|^p\log|v|}p\,\rd x\le\int_{\Om_2}\f{|v|^p\log|v|}p\,\rd x\nonumber\\
& \le\f1{\e\,\ve p}\int_{\Om_2}|v|^p|v|^\ve\,\rd x\le\f1{\e\,\ve p}\int_\Om|v|^{p+\ve}\,\rd x\le\f{B_{p+\ve}^{p+\ve}}{\e\,\ve p}\|\nb v\|_2^{p+\ve},\label{r3}
\end{align}
where we applied the inequality $\te^{-\ve}\log\te<(\e\,\ve)^{-1}$ for $\te\ge1$ with sufficiently small $\ve>0$ such that $p+\ve<\f{2d}{d-2}$.
Then we obtain
\begin{align*}
E(t) & \ge\f\ell2\int_\Om|\nb u(t)|^2\,\rd x+\f12\int_0^\infty g(s)\int_\Om a|\nb\eta^t(s)|^2\,\rd x\rd s-\f{B_{p+\ve}^{p+\ve}}{\e\,\ve p}\|\nb u(t)\|_2^{p+\ve}+\int_\Om\f{|u(t)|^p}{p^2}\,\rd x\\
& \ge\f\ell2\int_\Om|\nb u(t)|^2\,\rd x-\f{B^{p+\ve}}{\e\,\ve p}\left(\ell\|\nb u(t)\|_2^2\right)^{(p+\ve)/2}=\f12\la(t)^2-\f{B^{p+\ve}}{\e\,\ve p}\la(t)^{p+\ve}=G(\la(t)),
\end{align*}
which completes the proof.
\end{proof}

It is easy to verify that $G(\la)$ attains the maximum at
\[
\la_1=\left(\f{\e\,\ve p}{p+\ve}\right)^{\f1{p+\ve-2}}B^{-\f{p+\ve}{p+\ve-2}}>0
\]
with the maximum
\begin{equation}\label{5.2}
E_1=G(\la_1)=\left(\f12-\f1{p+\ve}\right)\la_1^2>0.
\end{equation}
With $\la_1$ and $E_1$, we recall the following result on an upper bound of $\la(t)$.

\begin{lemma}[see {\cite[Lemma 3.4]{23}}]\label{lemma 2.7}
Let Assumption $\ref{assum1}$ hold and $p$ satisfy \eqref{1.2}. Let $u$ be the solution to problem \eqref{1.1} with initial datum satisfying
\[
E(0)<E_1,\quad\la(0)<\la_1.
\]
Then there exists a constant $\la_2\in(0,\la_1)$ such that
\[
\la(t)=\sqrt\ell\,\|\nb u(t)\|_2\le\la_2,\quad\forall\,t\in [0,T_{\max}),
\]
where $T_{\max}$ is the maximum existence time.
\end{lemma}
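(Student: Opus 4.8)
The plan is to combine the dissipativity of the energy with the one-variable shape of $G$ in a standard potential-well (invariant-set) argument. First I would record the two ingredients. The first is that the energy is non-increasing along the flow. Multiplying the governing equation in \eqref{1.1} by $u_t$, integrating over $\Om$, and using the boundary conditions $\pa_\nu u-\int_0^\infty g(s)a\,\pa_\nu u(t-s)\,\rd s=y_t$ and $u_t+r\,y_t+q\,y=0$ together with Assumption \ref{assum1}, one arrives at an identity of the form
\[
\f{\rd}{\rd t}E(t)=\f12\int_0^\infty g'(s)\int_\Om a|\nb\eta^t(s)|^2\,\rd x\rd s-\int_\Om b\,h(u_t)u_t\,\rd x-\int_{\Ga_1}r|y_t|^2\,\rd S\le0,
\]
where each of the three terms is non-positive because $g'\le0$, $b\ge0$ with $h(s)s>0$, and $r\ge r_0>0$. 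Hence $E(t)\le E(0)<E_1$ for all $t\in[0,T_{\max})$. The second ingredient is the elementary analysis of $G$: since $p+\ve>2$, the map $G$ vanishes at $\la=0$, is strictly increasing on $[0,\la_1]$, attains its maximum $E_1=G(\la_1)$ at $\la_1$, and is strictly decreasing afterwards; moreover Lemma \ref{lemma 2.6} gives $G(\la(t))\le E(t)$ for every $t$.

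Next comes the invariance step. Because $G$ is continuous and strictly increasing on $[0,\la_1]$ with $G(0)=0\le E(0)<E_1=G(\la_1)$, there is a unique $\la_2\in(0,\la_1)$ satisfying $G(\la_2)=E(0)$. I would first prove that $\la(t)<\la_1$ for every $t\in[0,T_{\max})$. The function $t\mapsto\la(t)=\sqrt\ell\,\|\nb u(t)\|_2$ is continuous on $[0,T_{\max})$ by the regularity of the solution, and $\la(0)<\la_1$ by hypothesis. If the claim failed, continuity would produce a first time $t_0\in(0,T_{\max})$ with $\la(t_0)=\la_1$; but then the two ingredients would yield
\[
E_1=G(\la_1)=G(\la(t_0))\le E(t_0)\le E(0)<E_1,
\]
a contradiction. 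Hence $\la(t)\in[0,\la_1)$ throughout $[0,T_{\max})$.

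Finally I would upgrade this confinement to the desired uniform bound. For every $t\in[0,T_{\max})$ we now have $\la(t)\in[0,\la_1)$ together with $G(\la(t))\le E(t)\le E(0)=G(\la_2)$; since $G$ is strictly increasing on $[0,\la_1]$ and both $\la(t)$ and $\la_2$ lie in $[0,\la_1)$, this forces $\la(t)\le\la_2$, which is exactly the assertion with $\la_2\in(0,\la_1)$.

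The step I expect to require the genuine work is establishing the energy identity and the sign of $E'(t)$: this demands a careful integration by parts that handles the nonlocal memory term through the history variable $\eta^t$ and merges all boundary contributions coming from the acoustic relations into the three manifestly dissipative terms displayed above. Once the monotonicity of $E$ and the continuity of $\la(\cdot)$ are in hand, the remaining potential-well reasoning is purely a consequence of the shape of the scalar function $G$ and is routine.
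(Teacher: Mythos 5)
Your proof is correct and is the standard potential-well (invariant-set) argument: it combines the energy dissipation identity (recorded in the paper as \eqref{3.8}, with $y=\vp+\psi$), the lower bound $E(t)\ge G(\la(t))$ from Lemma \ref{lemma 2.6}, and the monotonicity of $G$ on $[0,\la_1]$ through a continuity/contradiction argument, exactly the ingredients behind this lemma. The paper itself offers no proof of Lemma \ref{lemma 2.7} --- it quotes the result from \cite[Lemma 3.4]{23} --- and your argument is precisely the expected one, with only the trivial caveat that in the degenerate case $E(0)=0$ the equation $G(\la_2)=E(0)$ gives $\la_2=0$, so there one should instead pick $\la_2\in(0,\la_1)$ arbitrarily.
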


Next, we give the existence theorem of solutions to problem \eqref{1.1}. Combining the Faedo-Galerkin method with the proof for the logarithmic nonlinearity in \cite{13} and that for nonlinear damping and memory terms in \cite{c4}, we can obtain

\begin{proposition}[Local existence]\label{theorem1}
Let $(u^0(\,\cdot\,,0),u^1,\eta^0)\in\mathcal{H}$ be given with $\eta^0(x,s)=u^0(x,0)-u^0(x,s)$. Let Assumption $\ref{assum1}$ hold and $p$ satisfy \eqref{1.2}. Then there exists $T>0$ such that problem \eqref{1.1} has a unique local weak solution $u$ in $\Om\times(0,T)$.
\end{proposition}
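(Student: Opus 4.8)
The plan is to follow the Faedo--Galerkin scheme, combined with the Dafermos history framework to render the infinite-memory term local in time. First I would introduce the relative displacement history $\eta^t(x,s):=u(x,t)-u(x,t-s)$, which satisfies the transport equation $\eta^t_t+\eta^t_s=u_t$ together with $\eta^t(\,\cdot\,,0)=0$. Substituting $u(t-s)=u(t)-\eta^t(s)$ into \eqref{eq-nonlocal} converts the nonlocal term into $g_0\,\rdiv(a\nb u)-\int_0^\infty g(s)\rdiv(a\nb\eta^t(s))\,\rd s$, so that the leading elliptic operator becomes $-\rdiv(k\nb u)$ with $k$ from \eqref{eq-def-k}; the acoustic condition on $\Ga_1$ is rewritten analogously. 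This recasts \eqref{1.1} as an autonomous first-order system for $(u,u_t,\eta^t,y)$ posed in the phase space $\mathcal{H}$.

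For the approximation I would fix a smooth basis $\{w_j\}$ of $H_{\Ga_0}^1(\Om)$ (for instance eigenfunctions of $-\tri$ with the mixed boundary data), together with bases adapted to the history variable $\eta^t$ and to the boundary unknown $y$ on $\Ga_1$, and seek finite-dimensional approximations $u^m,\eta^{t,m},y^m$. Projecting the reformulated system onto the first $m$ modes reduces it to a system of ordinary differential equations for the coefficients. Since $p>2$, the source $f(u)=|u|^{p-2}u\log|u|$ extends to a locally Lipschitz function with $f(0)=0$ (indeed $|u|^{p-1}|\log|u||\to0$ as $u\to0$), so Carath\'eodory's theorem provides approximate solutions on some interval $[0,T_m)$.

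Next I would derive the uniform a priori estimates. Testing the approximate equation with $u^m_t$ and the transport equation with the appropriate multiplier reassembles the kinetic, gradient, history and boundary contributions into the energy $E(t)$ of \eqref{x2.4}; the damping yields the nonnegative dissipation $\int_\Om b\,h(u^m_t)u^m_t\,\rd x\ge b_0h_1\int_A|u^m_t|^2\,\rd x$ by Assumption \ref{assum1}(iv),(vi), the memory dissipation is nonpositive thanks to \eqref{xx2.2}, and the acoustic boundary terms produce $\int_{\Ga_1}r|y^m_t|^2\,\rd S$. The delicate contribution is the logarithmic source: using the splitting \eqref{eq-Omega} and the subcritical bound of Lemma \ref{lemma 2.6} (legitimate because $p<2_*$ by \eqref{1.2}), I would absorb $\int_\Om|u^m|^p\log|u^m|\,\rd x$ into the gradient energy, so that a Gronwall argument bounds $(u^m,u^m_t,\eta^{t,m},y^m)$ in $\mathcal{H}$ uniformly in $m$ on a short interval $[0,T]$ with $T$ independent of $m$.

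Finally I would pass to the limit and prove uniqueness. The a priori bounds furnish weak-$\ast$ limits along a subsequence; the linear terms converge directly, while the two nonlinearities need strong convergence. As $p<2_*$, the Aubin--Lions lemma yields $u^m\to u$ strongly in $L^2(0,T;L^{p+\ve}(\Om))$, whence $f(u^m)\to f(u)$ by continuity and uniform integrability, and the monotonicity of $h$ with the growth bound in Assumption \ref{assum1}(vi) identifies the weak limit of $b\,h(u^m_t)$ through a Minty--Browder argument. For uniqueness I would estimate the energy of the difference $w=u-v$ of two solutions: the damping contributes a favorable sign by monotonicity, and the logarithmic term is controlled by the local Lipschitz bound $|f(u)-f(v)|\le C\bigl(|u|^{p-2}+|v|^{p-2}\bigr)\bigl(1+|\log|u||+|\log|v||\bigr)|u-v|$ together with H\"older and Sobolev embeddings, after which Gronwall's inequality forces $w\equiv0$. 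I expect the main obstacle to be precisely the logarithmic nonlinearity, since its non-polynomial growth must be tamed through the $\ve$-trick of Lemma \ref{lemma 2.6} in the a priori estimate, and its logarithmically singular modulus of continuity must be controlled in the energy norm for the uniqueness step.
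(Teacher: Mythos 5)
Your proposal is correct and follows essentially the same route the paper indicates: the paper itself omits the details, stating only that the result follows by combining the Faedo--Galerkin method with the treatment of the logarithmic nonlinearity in \cite{13} and of the nonlinear damping and memory terms in \cite{c4}, which is precisely the scheme (Dafermos history variable, Galerkin approximation with a boundary basis, energy estimates via the $\ve$-trick of Lemma \ref{lemma 2.6}, Aubin--Lions and monotonicity for the limit passage, and a Gronwall uniqueness argument) that you spell out. Your sketch is a reasonable expansion of what the paper leaves implicit, and the restriction $p<\f{2(d-1)}{d-2}$ in \eqref{1.2} is indeed what makes your H\"older--Sobolev control of the logarithmic difference quotient in the uniqueness step work.
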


The proof of Theorem  \ref{theorem1} is typical, thus we omit the details.

The next lemma provides further estimates for the energy $E(t)$.

\begin{lemma}\label{lemma2.8}
Let all assumptions in Lemma $\ref{lemma 2.7}$ hold. Then for any $t\in [0,T_{\max}),$ there exists a constant
\begin{equation}\label{eq-def-C}
\wt C:=\f{2\la_2^{p+\ve-2}B^{p+\ve}}{\e\,\ve p}\left(1-\f{2\la_2^{p+\ve-2}B^{p+\ve}}{\e\,\ve p}\right)^{-1}
\end{equation}
such that
\begin{gather}
\int_\Om\f{|u(t)|^p\log|u(t)|}p\,\rd x\le\wt C E(t)\le\wt C E(0),\label{r1}\\
\BE(t)\le(1+\wt C)E(t)\le(1+\wt C)E(0),\label{r2}
\end{gather}
where
\begin{align}
\BE(t) & :=\f12\int_\Om\rho|u_t(t)|^2\,\rd x+\f12\int_\Om k|\nb u(t)|^2\,\rd x+\int_\Om\f{|u(t)|^p}{p^2}\,\rd x\nonumber\\
& \quad\:\,+\f12\int_0^\infty g(s)\int_\Om a|\nb\eta^t(s)|^2\,\rd x\rd s+\f12\int_{\Ga_1}q|y(t)|^2\,\rd S.\label{r4}
\end{align}
\end{lemma}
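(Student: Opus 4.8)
The plan is to derive the bound \eqref{r1} first, since \eqref{r2} will then follow almost immediately by adding the logarithmic term back to $E(t)$. The key is to observe that the definition \eqref{x2.4} of $E(t)$ differs from $\BE(t)$ in \eqref{r4} only by the single term $-\int_\Om\f{|u|^p\log|u|}p\,\rd x$, so that
\[
\BE(t)=E(t)+\int_\Om\f{|u(t)|^p\log|u(t)|}p\,\rd x.
\]
Thus once we control the logarithmic integral by a multiple of $E(t)$, both estimates drop out together, and the monotonicity $E(t)\le E(0)$ (which I would take from the energy identity established in the existence theory of Proposition \ref{theorem1}) supplies the rightmost inequalities in \eqref{r1} and \eqref{r2}.

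To prove \eqref{r1}, I would reuse the splitting \eqref{eq-Omega} and the chain of estimates \eqref{r3} from the proof of Lemma \ref{lemma 2.6}, applied to $v=u(t)$. This gives
\[
\int_\Om\f{|u(t)|^p\log|u(t)|}p\,\rd x\le\f{B_{p+\ve}^{p+\ve}}{\e\,\ve p}\|\nb u(t)\|_2^{p+\ve}=\f{B^{p+\ve}}{\e\,\ve p}\la(t)^{p+\ve},
\]
recalling $B=B_{p+\ve}\ell^{-1/2}$ and $\la(t)=\sqrt\ell\,\|\nb u(t)\|_2$. The crucial next move is to invoke Lemma \ref{lemma 2.7}, which under the standing hypotheses guarantees $\la(t)\le\la_2<\la_1$ for all $t\in[0,T_{\max})$. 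I would write $\la(t)^{p+\ve}=\la(t)^{p+\ve-2}\la(t)^2\le\la_2^{p+\ve-2}\la(t)^2$ and then bound $\la(t)^2$ by relating it back to $E(t)$.

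The main obstacle, and the step requiring genuine care, is closing the loop to turn the bound on the logarithmic integral into a bound by $E(t)$ rather than by $\la(t)^2$. Here I expect to use the lower estimate from Lemma \ref{lemma 2.6} in a rearranged form: since
\[
E(t)\ge\f12\la(t)^2-\f{B^{p+\ve}}{\e\,\ve p}\la(t)^{p+\ve}\ge\left(\f12-\f{\la_2^{p+\ve-2}B^{p+\ve}}{\e\,\ve p}\right)\la(t)^2,
\]
and the coefficient in parentheses is positive because $\la_2<\la_1$, I can solve for $\f12\la(t)^2\le\bigl(1-\tfrac{2\la_2^{p+\ve-2}B^{p+\ve}}{\e\,\ve p}\bigr)^{-1}E(t)$. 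Substituting this into the estimate for the logarithmic integral produces exactly the constant $\wt C$ defined in \eqref{eq-def-C}, establishing \eqref{r1}. Verifying that $\wt C$ is well-defined and positive, i.e.\! that $1-\f{2\la_2^{p+\ve-2}B^{p+\ve}}{\e\,\ve p}>0$, amounts to checking $\la_2^{p+\ve-2}<\f{\e\,\ve p}{2B^{p+\ve}}$, which follows from $\la_2<\la_1$ and the explicit form of $\la_1$; this is the one algebraic point I would confirm explicitly before concluding.
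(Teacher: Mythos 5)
Your proof is correct and follows essentially the same route as the paper: both start from the estimate \eqref{r3}, use Lemma \ref{lemma 2.7} to replace $\la(t)^{p+\ve}$ by $\la_2^{p+\ve-2}\la(t)^2$, and close the self-referential loop to arrive at exactly the constant $\wt C$, with \eqref{r2} then following from $\BE(t)=E(t)+\int_\Om\f{|u(t)|^p\log|u(t)|}p\,\rd x$. The only cosmetic difference is that the paper bounds $\f\ell2\|\nb u(t)\|_2^2\le E(t)+\int_\Om\f{|u(t)|^p\log|u(t)|}p\,\rd x$ and solves the resulting inequality for the logarithmic integral, whereas you first solve the lower bound of Lemma \ref{lemma 2.6} for $\f12\la(t)^2$ and then substitute back; the two rearrangements are algebraically equivalent, and your explicit verification that $1-\f{2\la_2^{p+\ve-2}B^{p+\ve}}{\e\,\ve p}>0$ (which the paper leaves implicit) is a welcome addition.
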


\begin{proof}
It directly follows from \eqref{r3} and Lemma \ref{lemma 2.7} that
\begin{align*}
\int_\Om\f{|u(t)|^p\log|u(t)|}p\,\rd x & \le\f{B_{p+\ve}^{p+\ve}}{\e\,\ve p}\|\nb u(t)\|_2^{p+\ve}\le\f{B^{p+\ve}}{\e\,\ve p}\la^{p+\ve-2}(t)\,\ell\|\nb u(t)\|_2^2\\
& \le\f{2\la_2^{p+\ve-2}B^{p+\ve}}{\e\,\ve p}\left(E(t)+\int_\Om\f{|u(t)|^p\log|u(t)|}p\,\rd x\right),
\end{align*}
which implies \eqref{r1}. For \eqref{r2}, we combine \eqref{r1} with \eqref{r4} to deduce
\[
\BE(t)\le E(t)+\int_\Om\f{|u(t)|^p\log|u(t)|}p\,\rd x\le(1+\wt C)E(t)\le(1+\wt C)E(0),
\]
which completes the proof.
\end{proof}

\begin{remark}
Clearly, the estimate \eqref{r2} indicates that $\BE(t)$ is uniformly bounded for all $t\in[0, T_{\max})$, which further implies that the solution is global in time and thus the maximum existence time $T_{\max}=\infty$. Besides, we have $0\le E(t)\le E(0)$ for all $t\in[0,\infty)$.
\end{remark}

Finally, we shortly introduce some conclusions about microlocal defect measures.

\begin{proposition}[{see \cite[Theorem 5.1]{c3}}]\label{the5.1}
Let $\{v_n\}_{n\in\BN}\subset L_{\mathrm{loc}}^2(\Om)$  be a bounded sequence which converges weakly to $0$ in $L_{\mathrm{loc}}^2(\Om)$. Then there exists a subsequence $\{v_{n'}\}\subset\{v_n\}$ and a positive Radon measure $\mu$ on $\Om\times S^{d-1}$ such that for all pseudo-differential operators $A$ of order $0$ on $\Om$ admitting a principal symbol $\si_0(A)\ge0$ and all $\chi\in C_0^\infty(\Om)$ such that $\chi\si_0(A)=\si_0(A),$ there holds
\begin{equation}\label{ss5.1}
(A(\chi v_{n'}),\chi v_{n'})\longrightarrow\int_{\Om\times S^{d-1}}\si_0(A)(x,\xi)\,\rd\mu(x,\xi),\quad\mbox{as }n'\to\infty.
\end{equation}
The above measure $\mu$ is called the microlocal defect measure of the subsequence $\{v_{n'}\}$.
\end{proposition}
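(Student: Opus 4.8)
The plan is to reproduce the classical construction of microlocal defect measures due to Gérard and Tartar, which proceeds in three stages: reduce the quadratic functional so that its limit depends only on the principal symbol, extract a subsequence along which a bounded linear functional on symbols is well defined, and finally upgrade this functional to a \emph{positive} one and represent it by a Radon measure. Writing $q_n(A):=(A(\chi v_n),\chi v_n)$, the two analytic inputs are the following. By the Calderón--Vaillancourt theorem one has $|q_n(A)|\le C\|\si_0(A)\|_\infty\|\chi v_n\|_2^2$, so $\{q_n(A)\}_n$ is bounded uniformly in $n$ because $\{v_n\}$ is bounded in $L_{\mathrm{loc}}^2(\Om)$. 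Moreover, the limit depends on $\si_0(A)$ alone: if $\si_0(A)=0$, then $A$ has order $\le-1$, so $\chi A\chi$ maps $L^2$ into $H_{\mathrm{comp}}^1$ and is therefore compact on $L^2$ by the Rellich--Kondrachov theorem; since $v_n\rightharpoonup0$ weakly, $\chi A\chi v_n\to0$ strongly and hence $q_n(A)\to0$. Replacing $A$ by $(A+A^*)/2$, which preserves a real principal symbol, one may restrict to self-adjoint operators, so that every $q_n(A)$ is real.

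Next I would carry out the subsequence extraction. Since $C(\Om\times S^{d-1})$ is separable, I pick a countable dense family of admissible real symbols and, for each, a zeroth-order operator realizing it. Using the uniform bound above, a diagonal argument produces a subsequence $\{v_{n'}\}$ along which $q_{n'}(A)$ converges for every operator in the family. The estimate $|q_{n'}(A)|\le C\|\si_0(A)\|_\infty$ then extends this convergence to \emph{all} zeroth-order $A$ and shows that $L(\si_0(A)):=\lim_{n'}q_{n'}(A)$ is a bounded real linear functional of the principal symbol alone.

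The crux of the argument---and the step I expect to be the main obstacle---is the positivity of $L$. Here I would invoke the sharp Gårding inequality: for self-adjoint $A$ of order $0$ with $\si_0(A)\ge0$, one has $\mathrm{Re}(Aw,w)\ge-C\|w\|_{H^{-1/2}}^2$ for all $w$. Taking $w=\chi v_{n'}$ and using that the embedding $L_{\mathrm{comp}}^2\hookrightarrow H^{-1/2}$ is compact, so that $\chi v_{n'}\to0$ strongly in $H^{-1/2}$, I conclude $L(\si_0(A))\ge0$ whenever $\si_0(A)\ge0$. Thus $L$ is a positive linear functional on $C(\Om\times S^{d-1})$, and the Riesz--Markov representation theorem furnishes a positive Radon measure $\mu$ on $\Om\times S^{d-1}$ with $L(a)=\int_{\Om\times S^{d-1}}a\,\rd\mu$. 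Finally, the compatibility hypothesis $\chi\si_0(A)=\si_0(A)$ guarantees that inserting the cutoff $\chi$ does not alter the principal symbol, so that \eqref{ss5.1} holds as stated. The delicate point is precisely that the \emph{sharp} Gårding inequality (rather than the classical one) is required to treat symbols that merely touch zero, while the resulting error terms are controlled in exactly the $H^{-1/2}$ norm that the weak convergence annihilates.
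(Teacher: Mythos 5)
The paper never actually proves Proposition~\ref{the5.1}: it is imported wholesale from \cite[Theorem 5.1]{c3} (ultimately G\'erard's existence theorem for microlocal defect measures, i.e.\ Tartar's H-measures), and only its conclusion, filtered through Remark~\ref{rrk1}, is used in Step 4 of the proof of Theorem~\ref{lem1}. So there is no in-paper proof to compare against; judged on its own, your reconstruction is precisely the classical G\'erard--Tartar argument, and its architecture is sound: (i) an operator with vanishing principal symbol has order $-1$, hence becomes compact on $L^2$ after insertion of compactly supported cutoffs, so the limit functional depends on $\si_0(A)$ alone and is unaffected by the choice of $\chi$ with $\chi\si_0(A)=\si_0(A)$; (ii) diagonal extraction over a countable dense family of symbols; (iii) positivity via the sharp G\aa rding inequality combined with the compactness of $L^2_{\mathrm{comp}}\hookrightarrow H^{-1/2}$, which is exactly what the weak convergence annihilates; (iv) Riesz--Markov representation.

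One step does need repair. The Calder\'on--Vaillancourt theorem does \emph{not} give $|(A(\chi v_n),\chi v_n)|\le C\|\si_0(A)\|_\infty\|\chi v_n\|_2^2$: it bounds $\|A\|_{L^2\to L^2}$ by finitely many seminorms of the \emph{full} symbol, involving derivatives, not by the sup norm of the principal part. The bound you genuinely need for the density/extension step --- namely $\limsup_n|q_n(A)|\le C\|\si_0(A)\|_\infty$, so that closeness of principal symbols in $C(\Om\times S^{d-1})$ forces closeness of the limits --- holds only modulo compact errors: apply G\aa rding (or a square-root construction) to the operator $\left(\|\si_0(A)\|_\infty^2+\ep\right)I-A^*A$, whose principal symbol is bounded below by $\ep$, to obtain $\|Aw\|_2^2\le\left(\|\si_0(A)\|_\infty^2+\ep\right)\|w\|_2^2+(Kw,w)$ with $K$ compact, and then use $\chi v_n\rightharpoonup0$ to discard the compact term in the limit. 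With this standard repair your extension argument goes through unchanged. A smaller quibble: the sharp G\aa rding inequality is convenient but not strictly ``required'' in step (iii); the classical inequality applied to $A+\ep I$, followed by letting $\ep\to0$ and using the uniform $L^2$ bound on $\{\chi v_n\}$, yields the same positivity.
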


\begin{remark}\label{rrk1}
Proposition \ref{the5.1} assures that all bounded sequences in $L_{\mathrm{loc}}^2(\Om)$ converging weakly to $0$ includes a subsequence which admits a microlocal defect measure. Taking $A=f\in C_0^\infty(\Om)$ in \eqref{ss5.1}, we easily observe that
\[
\int_{\Om\times S^{d-1}}f|v_{n'}|^2\,\rd x\longrightarrow\int_{\Om\times S^{d-1}}f\,\rd\mu(x,\xi)\quad\mbox{as }n'\to\infty.
\]
Therefore, $\{v_{n'}\}$ converges to $0$ if and only if $\mu=0$.

\end{remark}
For more definitions and conclusions about microlocal defect measures, refer to the appendices of \cite{c3,c4}.

\section{Statements of main results}

The aim of this section is to establish the exponential stability result concerning problem \eqref{1.1} as well as to investigate in what particular cases the system is exponentially stable in the absence of the frictional damping term.

In order to state our main result, we first propose the following assumption.

\begin{assume}\label{assum2}
For arbitrary fixed $T>0,$ $V\in L^\infty(0,T;L^d(\Om))$ and a nonempty open subset $\om\subset\Om,$ the unique solution $w\in C((0,T);L^2(\Om))\cap C([0,T];H^{-1}(\Om))$ to
\[
\left\{\begin{alignedat}{2}
& w_{tt}-\rdiv\left(\f k\rho\nb w\right)+V w=0 & \quad &\mbox{in }\Om\times(0,T],\\
& w=0 & \quad &\mbox{in }\om\times(0,T]
\end{alignedat}\right.
\]
is the trivial one $w\equiv0$. Here $k$ was defined in \eqref{eq-def-k}.
\end{assume}

The above assumption is concerned with the unique continuation property of hyperbolic equations, which has been investigated extensively in literature. As was described by Cavalcanti et al.\! \cite{c5}, in the case of $V=0$, Assumption \ref{assum2} is satisfied according to Burq and G\'erard \cite[(6.28)--(6.29)]{b1}. If $\f k\rho$ satisfies the assumptions of Theorem \ref{lem1} below (see also Duyckaerts, Zhang and Zuazua \cite{zua1}), then Assumptions \ref{assum2} is a consequence of the observability inequality e.g.\! in \cite[(2.15)]{zua1}. If $V\in L^\infty(0,T;L^d(\Om))$ and $\f k\rho=1$, then Assumption \ref{assum2} is locally fulfilled according to the pioneering work of Ruiz \cite{r1}. Moreover, in the more general case that $V\in L^{\f{d+1}2}(0,T;L^{\f{d+1}2}(\Om))$ and $\f k\rho$ is not necessarily $1$, it follows from Koch and Tararu \cite{k1} that the unique continuation also holds locally. In summary, Assumption \ref{assum2} holds true when $\f k\rho$ is in the situation of \cite{r1}, and it is locally satisfied in the context of \cite{k1,r1}. In a more general context, it should be assumed globally, such as that in \cite{c4,wang}.

Now we are well prepared to state the main result.

\begin{theorem}\label{lem1}
Let Assumptions $\ref{assum1},$ $\ref{assum2}$ and the conditions in Lemma $\ref{lemma2.8}$ hold. Let $E(t)$ be the energy defined by \eqref{x2.4} and assume that the initial energy
\begin{equation}\label{x3.4}
E(0)\le\min\left\{E_1, \left(\f{\ell\e\,\ve }{B_{p+\ve}^{p+\ve}}\right)^{\f1{p+\ve-2}}\f\ell{1+\wt C}\right\}:=R,
\end{equation}
where $E_1,\wt C$ are constants defined in $\eqref{5.2},\eqref{eq-def-C}$ respectively and $B_{p+\ve}$ is the optimal constant defined in Lemma $\ref{lemma3}$. Then there exist constants $T_0,C_0,\ga>0$ depending on the initial value such that
\begin{equation}\label{3.4}
E(t)\le C_0E(0)\,\e^{-\ga t},\quad\forall\,t>T_0.
\end{equation}
\end{theorem}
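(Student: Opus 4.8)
The plan is to establish the exponential decay \eqref{3.4} by a contradiction argument combined with the multiplier method and microlocal analysis, following the overall scheme of \cite{c3,c4} but adapting it to the logarithmic nonlinearity and the acoustic boundary condition. The first step is to compute the energy dissipation law: differentiating $E(t)$ in \eqref{x2.4} along solutions of \eqref{1.1} and using the boundary conditions on $\Ga_1$ together with the relaxation inequality \eqref{xx2.2}, I would derive an identity of the schematic form
\[
E'(t)=-\int_\Om b(x)h(u_t)u_t\,\rd x+\f12\int_0^\infty g'(s)\int_\Om a|\nb\eta^t(s)|^2\,\rd x\rd s-\int_{\Ga_1}r|y_t|^2\,\rd S\le0,
\]
so that $E$ is non-increasing. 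The key reduction is then to prove a single \emph{observability-type estimate}: there exist $T>0$ and $C>0$ such that
\[
E(0)\le C\int_0^T\left(\int_\Om b\,h(u_t)u_t\,\rd x-\f12\int_0^\infty g'(s)\int_\Om a|\nb\eta^t(s)|^2\,\rd x\rd s+\int_{\Ga_1}r|y_t|^2\,\rd S\right)\rd t,
\]
for all solutions with $E(0)\le R$. Once this is available, the right-hand side is controlled by $E(0)-E(T)$ via the dissipation law, yielding $E(T)\le\theta E(0)$ for some $\theta\in(0,1)$, and iterating on successive time intervals together with the semigroup property gives \eqref{3.4} with an explicit rate $\ga$ and constant $C_0$.

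\textbf{The contradiction argument.}
To prove the observability estimate I would argue by contradiction: suppose it fails, so there is a sequence of solutions $(u_n,y_n)$ with normalized energy $E_n(0)=1$ but whose total dissipation over $[0,T]$ tends to $0$. The plan is to pass to weak limits using the uniform energy bounds from Lemma \ref{lemma2.8}, which guarantee boundedness in the natural energy space. The vanishing of the damping term $\int_0^T\int_\Om b\,h(u_t)u_t$ forces $u_{n,t}\to0$ on the damping region where $b\ge b_0$, in particular on $A$; the vanishing of the memory dissipation forces $\eta^t_n\to0$ in the appropriate weighted norm; and the vanishing of the boundary dissipation forces $y_{n,t}\to0$ on $\Ga_1$. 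The goal is to show that the weak limit $u$ satisfies a \emph{homogeneous} limit equation with vanishing velocity on $\om$ (a neighborhood of $A$), so that Assumption \ref{assum2} forces $u\equiv0$, contradicting the normalization $E_n(0)=1$ once we verify that no energy escapes in the limit.

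\textbf{Role of microlocal analysis and the main obstacle.}
The genuinely hard part is controlling the two nonlinear features in the passage to the limit. First, the logarithmic source $f(u)=|u|^{p-2}u\log|u|$ must be shown to behave like a potential term $V u$ with $V\in L^\infty(0,T;L^d(\Om))$ so that Assumption \ref{assum2} applies; here the subcritical condition \eqref{1.2} and the uniform bound on $\|\nb u_n\|_2$ from Lemma \ref{lemma 2.7} (guaranteed by the smallness \eqref{x3.4}) are essential to obtain the requisite integrability and compactness of $f(u_n)/u_n$. Second, on the region $A$ where $a\equiv0$ the elliptic part degenerates and one only controls $u_{n,t}\to0$, not $\nb u_n$; this is exactly where Proposition \ref{the5.1} enters. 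I would extract a microlocal defect measure $\mu$ for the sequence $\{u_{n,t}\}$ (or an appropriate renormalization) and show, using the transport/propagation properties of $\mu$ along the bicharacteristic flow of the limit operator $\pa_{tt}-\rdiv((k/\rho)\nb\,\cdot\,)$, that the vanishing of the velocity on $\om$ propagates so that $\mu\equiv0$ on a suitable set; by Remark \ref{rrk1} this upgrades weak convergence to strong convergence of the energy density, closing the gap between the degenerate region $A$ and the effective region. Combining the propagation of $\mu$ with the unique continuation in Assumption \ref{assum2} yields $u\equiv0$ and the desired contradiction. The delicate bookkeeping in the multiplier identities needed to absorb the acoustic boundary terms $\int_{\Ga_1}(q|y|^2+r|y_t|^2)$ into the estimate — for which I would introduce auxiliary multipliers adapted to the coupling $u_t+r y_t+q y=0$ — is the most technically demanding part specific to the present boundary condition.
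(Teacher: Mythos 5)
Your overall strategy (dissipation identity, reduction to an observability inequality, contradiction argument, unique continuation plus microlocal defect measures) matches the paper's, but there is a genuine gap at the very first step of your contradiction argument: you take the failing sequence with \emph{normalized} energy $E_n(0)=1$. The equation is not scale-invariant --- neither $f(u)=|u|^{p-2}u\log|u|$ nor $h(u_t)$ is homogeneous --- so a rescaled solution is no longer a solution, and the negation of the observability inequality only produces solutions with $\cE_n(0)\le R$ and dissipation$/\cE_n(0)\to0$. The paper therefore splits the argument according to whether the weak limit $(u,\de)$ of $(u_n,\vp+\psi_n)$ vanishes. When $(u,\de)\ne(0,0)$, no normalization is used: one passes to the limit (the limit equation \eqref{3.37} is \emph{not} homogeneous, it keeps the source $f(u)$), differentiates it in time so that $w=u'$, which vanishes on $\om=\Om\setminus A$, solves a wave equation with potential $V=-f'(u)\in L^\infty(0,T;L^d(\Om))$, applies Assumption \ref{assum2} to get $u'\equiv0$, and then --- a step entirely absent from your proposal --- rules out nontrivial \emph{stationary} solutions of $-\rdiv(k\nb u)=f(u)$ using the smallness hypothesis \eqref{x3.4} through the estimate \eqref{3.42}; this is where the threshold $R$ is really used, not merely to bound the potential. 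Only when $(u,\de)=(0,0)$ does the paper normalize, by $\al_n=\sqrt{\cE_n(0)}$, and then the normalized functions solve the \emph{modified} system \eqref{x13.36} containing $f(\al_n\wt u_n)/\al_n$ and $h(\al_n\wt u_n')/\al_n$, which must be analyzed in the two sub-cases $\al_n\to\al>0$ and $\al_n\to0$ before the multiplier and microlocal machinery of Steps 3--5 can be run. Your single-case outline with ``$E_n(0)=1$'' cannot even be set up without this case distinction.

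Two further points where your sketch would fail as written. First, you derive the vanishing of the velocity from the frictional damping, ``on the damping region where $b\ge b_0$, in particular on $A$''; but Assumption \ref{assum1}(iv) only guarantees $b\ge b_0$ on $A$, which is a closed set that may have empty interior, so this yields no \emph{open} set on which Assumption \ref{assum2} is applicable. In the paper the open set $\om=\Om\setminus A$ with $u'=0$ is extracted from the \emph{memory} dissipation, via the auxiliary functions $z_n=k\,u_n+a\int_0^\infty g(s)\eta_n^t(s)\,\rd s$: one shows $z_n'\rightharpoonup u'$ while $z_n'\to k\,u'$ in $H^{-1}(0,T;L^2)$, hence $(k-1)u'=0$, and $k\ne1$ exactly where $a>0$. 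Second, Assumption \ref{assum2} requires the solution of the potential equation \emph{itself} to vanish on $\om$; since only $u'$ vanishes there, the unique continuation must be applied to the time-differentiated equation with $V=-f'(u)$, not to $u$ with $V=f(u)/u$ as your reference to ``compactness of $f(u_n)/u_n$'' suggests.
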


\begin{remark}\label{rk2}
Let us consider a special choice of $\rho(x)$ as
\begin{equation}\label{eq-rho}
\rho(x):=k(x)^{\f d{d-2}}=(1-g_0a(x))^{\f d{d-2}}.
\end{equation}
Since $a=0$ in $A$ by definition, we have $\rho=k=1$ in $A$, which means that both $\rho$ and $k$ are constant functions in the portion of $\Om$ where $a$ vanishes. Thus, any geodesic of the metric $(\f{K(x)}{\rho(x)})^{-1}$ is a straight line in the set $A$ with $K(x)=k(x)\delta_{ij}$, where $\delta_{ij}$ is the Kronecker delta. In other words, any geodesic in the metric $(\f{K(x)}{\rho(x)})^{-1}$ crossing $A$ cannot be trapped inside $A$. Moreover, \cite{c3} proved that all geodesics of the metric $(\f{K(x)}{\rho(x)})^{-1}$ reach the boundary $\pa\Om$. As $a>0$ in $\om:=\Om\setminus A$, the set $\om$ geometrically controls $\Om$,  namely, it satisfies the well-known geometric control condition. In other words, every geodesic of $\Om$ traveling with the unit speed and emitted at $t=0$ reach $\om$ in time $t<T_0$, for a certain time $T_0>0$. See \cite{c3} for further details about the above discussion.
\end{remark}  

According to Remark \ref{rk2} and the choice of the effective region of $a$, it is possible to stabilize the system only utilizing the viscoelastic effect. More precisely, in such a particular case, we are able to obtain the following exponential stability of the system without the damping term $b\,h(u_t)$ in \eqref{1.1}.

\begin{corollary}\label{co1}
Under the same assumption of Theorem $\ref{lem1},$ the same exponential stability result \eqref{3.4} still holds when the damping term $b\,h(u_t)$ is absent from problem \eqref{1.1}.
\end{corollary}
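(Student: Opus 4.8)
The plan is to revisit the proof of Theorem \ref{lem1} and isolate the single place where the frictional term $b\,h(u_t)$ is genuinely used, then show that for the special density \eqref{eq-rho} this role is taken over entirely by the geometry of $\Om$. Recall that the proof of Theorem \ref{lem1} runs by contradiction, combining the multiplier method with microlocal analysis. Differentiating the energy yields a dissipation identity in which $-E'(t)$ is a sum of three non-negative contributions: the memory dissipation $-\f12\int_0^\infty g'(s)\int_\Om a|\nb\eta^t(s)|^2\,\rd x\rd s$, the acoustic boundary dissipation $\int_{\Ga_1}r|y_t|^2\,\rd S$, and the frictional dissipation $\int_\Om b\,h(u_t)u_t\,\rd x$. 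In Theorem \ref{lem1}, the frictional term controls $u_t$ on $A$ (since $b\ge b_0>0$ and $h_1s^2\le h(s)s$ there by Assumption \ref{assum1}), while the memory term controls $\nb u$ on $\om=\Om\setminus A$, where $a>0$. Deleting $b\,h(u_t)$ removes only the first of these, so the task is to recover the control on $A$ purely from propagation.

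First I would reproduce the contradiction argument unchanged up to the point where one passes to a microlocal defect measure $\mu$ associated with a suitably normalized sequence of solutions, via Proposition \ref{the5.1}. The vanishing of the total dissipation still forces the memory and boundary contributions to zero; consequently $\mu$ must vanish on the open set $\om$ on which $a>0$, and the weak limit of the normalized sequence satisfies a homogeneous wave equation of precisely the form treated in Assumption \ref{assum2}. The only structural difference from Theorem \ref{lem1} is that $\mu$ is no longer automatically annihilated on $A$, so one cannot yet conclude $\mu\equiv0$.

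The decisive step is then the propagation of $\mu$ along the bicharacteristic (geodesic) flow. By the special choice \eqref{eq-rho}, Remark \ref{rk2} gives $\rho=k=1$ on $A$, so the metric $(K/\rho)^{-1}$ is flat inside $A$ and no geodesic can be trapped there; moreover, as established in \cite{c3}, every geodesic of this metric reaches $\pa\Om$ and therefore must enter $\om$. Since the defect measure is transported along this flow and already vanishes on $\om$, the geometric control condition forces $\mu\equiv0$ on all of $\Om$, including $A$ — exactly the conclusion that the frictional damping supplied in Theorem \ref{lem1}, now furnished by the geometry alone. The resulting strong convergence of the normalized sequence to zero contradicts its normalization, and the unique continuation of Assumption \ref{assum2}, applied to the weak limit, closes the argument just as in the original proof.

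I expect the main obstacle to be the rigorous execution of this propagation step rather than the surrounding estimates. One must verify that the equation satisfied by $\mu$ is genuinely of the admissible transport type, that the bicharacteristic flow is well defined up to the boundary $\Ga_0\cup\Ga_1$ where the reflection of geodesics interacts with the acoustic condition, and that the geometric control time $T_0$ of Remark \ref{rk2} is compatible with the observability window used in the contradiction argument. Once the geometric control delivers the vanishing of $\mu$ on $A$, all remaining estimates coincide with those in the proof of Theorem \ref{lem1} and require no modification.
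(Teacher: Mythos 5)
Your proposal is correct and takes essentially the same route as the paper: the paper proves Corollary \ref{co1} by rerunning the proof of Theorem \ref{lem1} (in which the damping terms only ever enter through quantities that vanish in the limit), with the propagation of the microlocal defect measure along the bicharacteristics of $\tau^2-\f{k}{\rho}\|\xi\|^2$ justified by the geometric control condition supplied by the choice \eqref{eq-rho} of Remark \ref{rk2} --- exactly your ``decisive step.'' The paper itself provides no further details, merely stating that the corollary ``can be proved analogously'' and referring to \cite{c3}.
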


On the contrary, if $\rho(x)$ is simply a function satisfying Assumption \ref{assum1}(i), the presence of the damping term is essential.

\section{Proof of Theorem \ref{lem1}}

This section is devoted to the proof of Theorem \ref{lem1}. We mainly use the argument of contradiction to derive the exponential stability of the system, and we divide the proof into two cases according to the vanishing of $(u,\de)$, where $\de$ is an auxiliary function to be introduced later. In the case of $(u,\de)\ne(0,0)$, we construct suitable functions and apply Assumption \ref{assum2} to deduce the contradiction. In the case of $(u,\de)=(0,0)$, we use the method of multipliers and the microlocal tools.

To overcome a technical difficulty involving a passage to the limit in the boundary equation we work with an equivalent problem which has null initial data involving the boundary function.
For this purpose we define the auxiliary functions
\[
\vp(x,t):=y^0(x)+t\,\pa_\nu u^0(x),\quad\psi(x,t):=y(x,t)-\vp(x,t).
\]
For the definition of $\eta=\eta^t$ corresponding to the relative displacement history,
\[
\eta(x,t,s):=\eta^t(x,s)=u(x,t)-u(x,t-s),\quad x\in\Om,\ t\ge0,\ s\ge0.
\]
Then, proceeding formally, we obtain
\[
\eta_t+\eta_s=u_t,\quad\mbox{in }\Om\times(0,\infty)\times(0,\infty).
\]
Therefore, if $(u,y)$ is the solution of \eqref{1.1}, then the couple $(u,\psi)$ is a solution of
\begin{equation}\label{3.6}
\left\{\begin{alignedat}{2}
& \rho\,u_{tt}-\rdiv(k\nb u)+\int_0^\infty g(s)\rdiv(a\nb\eta^t(s))\,\rd s+b\,h(u_t)=f(u) & \quad &\mbox{in }\Om\times(0,\infty),\\
& \eta_t+\eta_s=u_t & \quad &\mbox{in }\Om\times(0,\infty)^2,\\
& u=0 & \quad &\mbox{on }\Ga_0\times(0,\infty),\\
& \begin{aligned}
& \pa_\nu^k u-\int_0^\infty g(s)a\pa_\nu\eta^t(s)\,\rd s=\vp_t+\psi_t,\\
& r(\vp_t+\psi_t)+q(\vp+\psi)=-u_t
\end{aligned} & \quad &\mbox{on }\Ga_1\times(0,\infty),\\
& u(\,\cdot\,,-s)=u^0(\,\cdot\,,s),\ u_t(\,\cdot\,,0)=u^1,\ \eta^0(\,\cdot\,,s)=u^0(\,\cdot\,,0)-u^0(\,\cdot\,,s) & \quad &\mbox{in }\Om,\ s\ge0,\\
& \eta^t(\,\cdot\,,0)=0, & \quad &\mbox{in }\Om,\ t\ge0,\\
& \psi=0 & \quad &\mbox{on }\Ga_1\times\{0\},
\end{alignedat}\right.
\end{equation}
where $\pa_\nu^k u:=k\nb u\cdot\nu$. We define the energy associated with \eqref{3.6} by
\begin{align*}
\cE(t) & :=\f12\int_\Om\rho|u_t(t)|^2\,\rd x+\f12\int_\Om k|\nb u(t)|^2\,\rd x+\f12\int_0^\infty g(s)\int_\Om a|\nb\eta^t(s)|^2\,\rd x\rd s\\
& \quad\:\,-\int_\Om\f{|u(t)|^p\log|u(t)|}p\,\rd x+\int_\Om\f{|u(t)|^p}{p^2}\,\rd x+\f12\int_{\Ga_1}q(\vp+\psi)^2(t)\,\rd S.
\end{align*}
Then it is not difficult to verify that
\begin{equation}\label{3.8}
\cE'(t)=-\int_\Om b\,h(u_t(t))u_t(t)\,\rd x+\f12\int_0^\infty g'(s)\int_\Om a|\nb\eta^t(s)|^2\,\rd x\rd s-\int_{\Ga_1}r(\vp+\psi)_t^2(t)\,\rd S
\end{equation}
for all $t\ge0$.  Then we have
\[
\cE(T)=\cE(0)-\int_0^T\!\!\!\int_\Om b\,h(u_t)u_t\,\rd x\rd t+\f12\int_0^T\!\!\!\int_0^\infty g'(s)\int_\Om a|\nb\eta^t(s)|^2\,\rd x\rd s\rd t-\int_0^T\!\!\!\int_{\Ga_1}r(\vp+\psi)_t^2\,\rd S\rd t
\]
for all $T\ge0$. Finally, we observe that $\cE(t)=E(t)$
for all $t\ge0$. The proof of theorem \ref{lem1} is equivalent to the proof of the following inequality: for all $T>T_0$ and $R>0$ verifying, there exists a constant $C=C(T,E(0))$ such that
\begin{align}
\cE(0) & \le C\left(\int_0^T\!\!\!\int_\Om b\,h(u_t)u_t\,\rd x\rd t-\int_0^T\!\!\!\int_0^\infty g'(s)\int_\Om a|\nb\eta^t(s)|^2\,\rd x\rd s\rd t\right.\nonumber\\
& \qquad\;\;\:\left.+\int_0^T\!\!\!\int_{\Ga_1}r(\vp+\psi)_t^2\,\rd S\rd t\right),\label{3.11}
\end{align}
where $(u,\psi,\eta)$ is the solution to problem \eqref{3.6}.

We are going to prove \eqref{3.11} for regular solutions and the result is obtained for mild solutions using standard density procedure. Our proof relies on contradiction arguments. Thus, in each case, there exist a time $T>T_0>0$ and a sequence of solutions $(u_n,\psi_n,\eta_n)$ to \eqref{3.6} verifying
\begin{equation}\label{3.13}
\cE_n(0)\le R
\end{equation}
for all $n\in\BN$, where
\begin{align*}
\cE_n(t) & :=\f12\int_\Om\rho|u_{n}'(t)|^2\,\rd x+\f12\int_\Om k|\nb u_n(t)|^2\,\rd x+\f12\int_0^\infty g(s)\int_\Om a|\nb\eta_n^t(s)|^2\,\rd x\rd s\nonumber\\
& \quad\;\:-\int_\Om\f{|u_n(t)|^p\log|u_n(t)|}p\,\rd x+\int_\Om\f{|u_n(t)|^p}{p^2}\,\rd x+\f12\int_{\Ga_1}q(\vp+\psi_n)^2(t)\,\rd S
\end{align*}
is the energy associated with the solution $(u_n,\psi_n,\eta_n)$ to \eqref{3.6} and we denote e.g.\! $u_t,u_{tt}$ by $u',u''$ etc.\! for simplicity. Meanwhile, there holds
\begin{align*}
\lim_{n\to\infty}\cE_n(0)\Bigg( &\int_0^T\!\!\!\int_\Om b\,h(u_n')u_n'\,\rd x\rd t-\int_0^T\!\!\!\int_0^\infty g'(s)\int_\Om a|\nb\eta_n^t(s)|^2\,\rd x\rd s\rd t\nonumber\\
& \left.+\int_0^T\!\!\!\int_{\Ga_1}r(\vp+\psi_n)_t^2\,\rd S\rd t\right)^{-1}=\infty.
\end{align*}
The above limit yields
\begin{align*}
\lim_{n\to\infty}\f1{\cE_n(0)}\Bigg( & \int_0^T\!\!\!\int_\Om b\,h(u_n')u_n'\,\rd x\rd t-\int_0^T\!\!\!\int_0^\infty g'(s)\int_\Om a|\nb\eta_n^t(s)|^2\,\rd x\rd s\rd t\nonumber\\
& \left.+\int_0^T\!\!\!\int_{\Ga_1}r(\vp+\psi_n)_t^2\,\rd S\rd t\right)=0,
\end{align*}
which, together with the estimate \eqref{3.13}, implies
\begin{align}
\lim_{n\to\infty}\Bigg( & \int_0^T\!\!\!\int_\Om b\,h(u_n')u_n'\,\rd x\rd t-\int_0^T\!\!\!\int_0^\infty g'(s)\int_\Om a|\nb\eta_n^t(s)|^2\,\rd x\rd s\rd t\nonumber\\
& \left.+\int_0^T\!\!\!\int_{\Ga_1}r(\vp+\psi_n)_t^2\,\rd S\rd t\right)=0.\label{3.18}
\end{align}
Besides form \eqref{xx2.2}, it follows that
\begin{align}
\lim_{n\to\infty}\Bigg( & \int_0^T\!\!\!\int_\Om b\,h(u_n')u_n'\,\rd x\rd t+\int_0^T\!\!\!\int_0^\infty g(s)\int_\Om a|\nb\eta_n^t(s)|^2\,\rd x\rd s\rd t\nonumber\\
& \left.+\int_0^T\!\!\!\int_{\Ga_1}r(\vp+\psi_n)_t^2\,\rd S\rd t\right)=0.\label{x3.18}
\end{align}
Combining \eqref{3.8} and \eqref{3.13}, we obtain $\cE_n(t)\le R$ for all $n\in\BN$ and all $t\ge0$. Then there exist weakly convergent subsequences of $\{u_n\}$ and $\{\vp+\psi_n\}$ (still denoted in the same way) and the respective weak limits $u:\Om\times(0,T)\longrightarrow\BR$ and $\de:\Ga_1\times(0,T)\longrightarrow\BR$ such that
\begin{alignat}{2}
u_n & \xrightharpoonup{\ {\rm w.}\ }u & \quad &\mbox{in }L^2(0,T;H_{\Ga_0}^1(\Om)),\label{3.20}\\
u_n' & \xrightharpoonup{\ {\rm w.}\ }u' & \quad &\mbox{in }L^2(0,T;L^2(\Om)),\label{3.21}\\
\vp+\psi_n & \xrightharpoonup{\ {\rm w.}\ }\de & \quad &\mbox{in }L^2(0,T;L^2(\Ga_1)),\label{3.22}\\
\vp'+\psi_n' & \xrightharpoonup{\ {\rm w.}\ }\de' & \quad &\mbox{in }L^2(0,T;L^2(\Ga_1))\label{3.23}
\end{alignat}
as $n\to\infty$. Since $H_{\Ga_0}^1(\Om)\hookrightarrow L^2(\Om)$ compactly, from the Rubin-Lions theorem \cite{14}, we have
\begin{equation}\label{3.24}
u_n\longrightarrow u\quad\mbox{in }L^2(0,T;L^2(\Om)),\ n\to\infty.
\end{equation}
On the other hand, since $2<p<\f{2d-2}{d-2}<2_*=\f{2d}{d-2}$, we can choose $\ve>0$ such that $2(p-1+\ve)<2_*$. Thus, for $f(u_n)=|u_n|^{p-2}u_n\log|u_n|$, we perform direct calculations and employ the Sobolev inequality to estimate
\begin{align}
\int_\Om|f(u_n)|^2\,\rd x & =\int_{\{|u_n|<1\}}|f(u_n)|^2\,\rd x+\int_{\{|u_n|\ge1\}}|f(u_n)|^2\,\rd x\nonumber\\
& \le(\e(p-1))^{-2}|\Om|+(\e\,\ve)^{-2}\int_{\{|u_n|\ge1\}}|u_n|^{2(p-1+\ve)}\,\rd x\nonumber\\
& \le(\e(p-1))^{-2}|\Om|+(\e\,\ve)^{-2}\left(B_{2(p-1+\ve)}\|\nb u_n\|_2\right)^{2(p-1+\ve)}\nonumber\\
& \le(\e(p-1))^{-p'}|\Om|+(\e\,\ve)^{-p'}\left(B_{2(p-1+\ve)}^2\f{2(1+\wt C)E_n(0)}\ell\right)^{p-1+\ve},\label{3.27}
\end{align}
where $B_{2(p-1+\ve)}$ is the optimal constant defined in Lemma \ref{lemma3}. Here we also used \eqref{r2} and the inequalities
\begin{gather*}
|\te^{p-1}\log\te|\le(\e(p-1))^{-1},\quad0<\te<1\\
\te^{-\ve}\log\te\le(\e\,\ve)^{-1},\quad\te\ge1,\ \ve>0.
\end{gather*}
Hence, from \eqref{3.24}, \eqref{3.27} and Lions Lemma (\cite[Lemma 1.3]{14}), we have
\[
f(u_n)\xrightharpoonup{\ {\rm w.}\ }f(u)\quad\mbox{in }L^2(0,\infty;L^2(\Om)),\ n\to\infty.
\]
From \eqref{3.18} and Assumption \ref{assum1}(v) we get
\begin{equation}\label{3.29}
\vp'+\psi_n'\longrightarrow0\quad\mbox{in }L^2(0,T;L^2(\Ga_1)),\ n\to\infty.
\end{equation}
At this moment, we used that $\psi_n(0)=0$. This allows us to conclude that
\begin{equation}\label{3.30}
(\vp+\psi_n)(0)\longrightarrow\vp(0)=y^0\quad\mbox{in }L^2(\Ga_1),\ n\to\infty.
\end{equation}
From \eqref{3.22} and \eqref{3.30}, we infer $\de(x,0)= y^0$ and hence
\[
(\vp+\psi_n)(t)=\int_0^t(\vp'+\psi_n')\,\rd s+(\vp+\psi_n)(0),\quad\de=\int_0^t\de'\,\rd s+\de(0)
\]
on $\Ga_1$ for $t\ge0$. Thus, using Cauchy-Schwarz inequality, we get
\begin{align}
& \quad\,\int_0^T\!\!\!\int_{\Ga_1}(\vp+\psi_n-\de)^2\,\rd S\rd t\nonumber\\
& \le2\int_0^T\!\!\!\int_{\Ga_1}\left(\int_0^t(\vp'+\psi_n'-\de')\,\rd s\right)^2\rd S\rd t+2\int_0^T\!\!\!\int_{\Ga_1}|(\vp+\psi_n)(0)-\de(0)|^2\,\rd S\rd t\nonumber\\
& \le2T^2\int_0^T\!\!\!\int_{\Ga_1}|\vp'+\psi_n'-\de'|^2\,\rd S\rd t+2T\int_{\Ga_1}|(\vp+\psi_n)(0)-y^0|^2\,\rd S.\label{3.34}
\end{align}
From \eqref{3.23} and \eqref{3.29}, we obtain that $\de'=0$. Thus, from \eqref{3.29}, \eqref{3.30} and \eqref{3.34} we obtain
\begin{equation}\label{3.35}
\vp+\psi_n\longrightarrow\de\quad\mbox{in }L^2(0,T;L^2(\Ga_1)),\ n\to\infty.
\end{equation}
At this point, to obtain the desired result, we divide the proof into two cases, namely, $(u,\de)\ne(0,0)$ and $(u,\de)=(0,0)$.

\subsection{The case of $(u,\de)\ne(0,0)$.}

For each $n\in\BN$, $(u_n,\eta_n,\psi_n)$ is a solution of the following problem
\begin{equation}\label{r13.36}
\left\{\begin{alignedat}{2}
& \rho\,u_n''-\rdiv(k\nb u_n)+\int_0^\infty g(s)\rdiv(a\nb\eta_n^t(s))\,\rd s+b\,h(u_n')=f(u_n) & \quad &\mbox{in }\Om\times(0,\infty),\\
& \eta_n'+\pa_s\eta_n=u_n' & \quad &\mbox{in }\Om\times(0,\infty)^2,\\
& u_n=0 & \quad &\mbox{on }\Ga_0\times(0,\infty),\\
& \begin{aligned}
& \pa_\nu^k u_n-\int_0^\infty g(s)a\,\pa_\nu\eta_n^t(s)\,\rd s=\vp'+\psi_n',\\
& r(\vp'+\psi_n')+q(\vp+\psi_n)=-u_n'
\end{aligned} & \quad &\mbox{on }\Ga_1\times(0,\infty),\\
& u_n(\,\cdot\,,-s)=u^0(\,\cdot\,,s),\ u_n'(\,\cdot\,,0)=u^1,\ \eta_n^0(\,\cdot\,,s)=u^0(\,\cdot\,,0)-u^0(\,\cdot\,,s) & \quad &\mbox{in }\Om,\ s\ge0,\\
& \eta_n^t(\,\cdot\,,0)=0, & \quad &\mbox{in }\Om,\ t\ge0,\\
& \psi_n=0 & \quad &\mbox{on }\Ga_1\times\{0\},
\end{alignedat}\right.
\end{equation}
Defining a sequence of auxiliary functions
\begin{equation}\label{la3}
z_n(t):=k\,u_n(t)+a\int_0^\infty g(s)\eta_n^t(s)\,\rd s,\quad n\in\BN,
\end{equation}
we take advantage of the second equation in \eqref{r13.36}, integration by parts and $\eta_n^t(\,\cdot\,,0)=0$ in $\Om$ for $t\ge0$ to deduce
\begin{align*}
z_n'(t) & =k\,u_n'(t)+a\int_0^\infty g(s)(\eta_n^t(s))'\,\rd s=k\,u_n'(t)+a\int_0^\infty g(s)(-\pa_s\eta_n^t(s)+u_n'(t))\,\rd s\\
& =u_n'(t)+a\int_0^\infty g'(s)\eta_n^t(s)\,\rd s.
\end{align*}
Let $\om'$ be an arbitrary closed subset of $\Om$ such that $A\subset\subset\om'$. Then there exists a constant $a_0>0$ depending on $\om'$ such that $a\ge a_0$ on $\ov{\Om\setminus\om'}$. Then we employ the Poincar\'e inequality to bound
\begin{equation}\label{la1}
\int_{\Om\setminus\om'}|\eta_n^t(s)|^2\,\rd x\le\f1{a_0\la_1}\int_{\Om\setminus\om'}a|\nb\eta_n^t(s)|^2\,\rd x,
\end{equation}
where $\la_1>0$ is the principal eigenvalue of the elliptic eigenvalue problem
\begin{equation}\label{eq-eigen}
\begin{cases}
-\rdiv(k\nb v)=\la\,v &\mbox{in }\Om,\\
v=0 &\mbox{on }\pa\Om.
\end{cases}
\end{equation}
Then from \eqref{x3.18}, \eqref{3.21} and \eqref{la1}, we yield
\begin{equation}\label{la2}
z_n'\xrightharpoonup{\ {\rm w.}\ }u'\quad\mbox{in }L^2(0,T;L^2(\Om\setminus\om')),\ n\to\infty.
\end{equation}
On the other hand, proceeding analogue computations and passing to the limit in \eqref{la3}, we deduce from \eqref{x3.18}, \eqref{3.24} and \eqref{la1} that
\[
z_n\longrightarrow k\,u\quad\mbox{in }L^2(0,T;L^2(\Om\setminus\om'))\hookrightarrow H^{-1}(0,T;L^2(\Om\setminus\om')),\ n\to\infty,
\]
where $H^{-1}(0,T)$ is the dual space of $H_0^1(0,T)$. Consequently, we obtain
\begin{equation}\label{la4}
z_n'\longrightarrow k\,u'\quad\mbox{in }H^{-1}(0,T;L^2(\Om\setminus\om')),\ n\to\infty.
\end{equation}
Then $k\,u'=u'$ in $L^2(0,T;L^2(\Om\setminus\om'))$ derive from convergence \eqref{la2} and \eqref{la4}. However, due to $k-1\ne0$ in $\Om\setminus\om'$ for any $\om'\supset\supset A$, there should hold
\[
u'\equiv0\quad\mbox{a.e.\! in }(\Om\setminus\om')\times(0,T),\ \forall\,\om'\supset\supset A'.
\]
Since $\om'$ and $T$ were chosen arbitrarily, we conclude
\begin{equation}\label{la5}
u'\equiv0\quad\mbox{a.e.\! in }\om\times(0,\infty),
\end{equation}
where we denote $\om:=\Om\setminus A$.

Passing $n\to\infty$ in the first, third and fourth equation in problem \eqref{r13.36} and utilizing the convergence \eqref{3.18}, \eqref{3.20}--\eqref{3.23} and \eqref{la5}, we arrive at
\begin{equation}\label{3.37}
\begin{cases}
\rho\,u''-\rdiv(k\nb u)=f(u) &\mbox{in }\Om\times(0,\infty),\\
u=0 &\mbox{on }\Ga_0\times(0,\infty),\\
\pa_\nu^k u=0 &\mbox{on }\Ga_1\times(0,\infty),\\
u'=0 &\mbox{in }\om\times(0,\infty).
\end{cases}
\end{equation}
Taking the $t$ derivative in the first equation in \eqref{3.37} and writing $w:=u'$ yield
\[
\begin{cases}
\rho\,w''-\rdiv(k\nb w)+V w=0 &\mbox{in }\Om\times(0,\infty),\\
w=0 &\mbox{in }\om\times(0,\infty),
\end{cases}
\]
where
\[
V:=-f'(u)=-(p-1)|u|^{p-2}\log|u|-|u|^{p-2}.
\]
Similarly to the estimates in \eqref{3.27}, one can show $V\in L^\infty(0,T;L^d(\Om))$ for any $T>0$. Then Assumption \ref{assum2} allows us to conclude that $u'=v\equiv0$ in $\Om\times(0,\infty)$ and consequently $u''\equiv0$ in $\Om\times(0,\infty)$.
Plugging this into \eqref{3.37} immediately gives
\begin{equation}\label{3.40}
\begin{cases}
-\rdiv(k\nb u)=f(u) &\mbox{in }\Om\times(0,\infty),\\
u=0 &\mbox{on }\Ga_0\times(0,\infty),\\
\pa_\nu^k u=0 &\mbox{on }\Ga_1\times(0,\infty).
\end{cases}
\end{equation}
Multiplying both sides of the first equation of \eqref{3.40} by $u$ and integrating over $\Om$, we use the divergence theorem to derive
\[
\int_\Om k|\nb u|^2\,\rd x=\int_\Om|u|^p\log|u|\,\rd x,
\]
which, together with \eqref{r3}, \eqref{r2} and \eqref{3.40}, indicates
\begin{align}
\ell\int_\Om|\nb u|^2\,\rd x & \le\int_\Om k|\nb u|^2\,\rd x=\int_\Om|u|^p\log|u|\,\rd x\le\f{B_{p+\ve}^{p+\ve}}{\e\,\ve }\|\nb u\|_2^{p+\ve}\nonumber\\
& \le\f{B_{p+\ve}^{p+\ve}}{\e\,\ve }\left(\f{(1+\wt C)E(0)}\ell\right)^{p-2+\ve}\|\nb u\|_2^2.\label{3.42}
\end{align}
Owing to the assumption \eqref{x3.4}, it turns out that
\[
\ell>\f{B_{p+\ve}^{p+\ve}}{\e\,\ve }\left(\f{(1+\wt C)E(0)}\ell\right)^{p-2+\ve}
\]
and eventually
\begin{equation}\label{3.43}
u=0\quad\mbox{in }\Om\times(0,\infty).
\end{equation}

On the other hand, we combine \eqref{3.29} and \eqref{3.35} to deduce $\de'=0$. Then passing $n\to\infty$ in the fifth equation of \eqref{r13.36} and using \eqref{3.43}, we conclude
\[
\de=0\quad\mbox{in }\Ga_1\times(0,\infty).
\]
Consequently, we arrive at $(u,\de)=(0,0)$, which contradicts with our original assumption.

\subsection{The case of $(u,\de)=(0,0)$.}

The proof for this case is more complicated and we divide this subsection into 5 steps.\medskip

{\bf Step 1.} In order to get the desired results, first we normalize
\[
\al_n=\sqrt{\cE_n(0)},\quad\wt u_n=\f{u_n}{\al_n},\quad\wt\eta_n^{\,t}=\f{\eta_n^t}{\al_n},\quad\wt\vp=\f\vp{\al_n},\quad\wt\psi_n=\f{\psi_n}{\al_n},\quad\wt u_n^{\,0}=\f{u^0}{\al_n},\quad\wt u_n^{\,1}=\f{u^1}{\al_n}.
\]
Thus, for each $n\in\BN$, $(\wt u_n,\wt\eta_n,\wt\psi_n)$ is the solution to
\begin{equation}\label{x13.36}
\left\{\begin{alignedat}{2}
& \rho\,\wt u_n^{\,\prime\prime}-\rdiv(k\nb\wt u_n)+\int_0^\infty g(s)\rdiv(a\nb\wt\eta_n^{\,t}(s))\,\rd s+b\f{h(\al_n\wt u_n^{\,\prime})}{\al_n}=\f{f(\al_n\wt u_n)}{\al_n} & \quad & \mbox{in }\Om\times(0,\infty),\\
& \wt\eta_n^{\,\prime}+\pa_s\wt\eta_n=\wt u_n^{\,\prime} & \quad & \mbox{in }\Om\times(0,\infty)^2,\\
& \wt u_n=0 & \quad & \mbox{on }\Ga_0\times(0,\infty),\\
& \begin{aligned}
& \pa_\nu^k\wt u_n-\int_0^\infty g(s)a\,\pa_\nu\wt\eta_n^{\,t}(s)\,\rd s=\wt\vp^{\,\prime}+\wt\psi_n^{\,\prime},\\
& r(\wt\vp^{\,\prime}+\wt\psi_n^{\,\prime})+q(\wt\vp+\wt\psi_n)=-\wt u_n^{\,\prime}
\end{aligned} & \quad & \mbox{in }\Ga_1\times(0,\infty),\\
& \wt u_n(\,\cdot\,,-s)=\wt u_n^{\,0}(\,\cdot\,,s),\ \wt u_n^{\,\prime}(\,\cdot\,,0)=\wt u_n^{\,1},\ \wt\eta_n^{\,0}(\,\cdot\,,s)=\wt u_n^{\,0}(\,\cdot\,,0)-\wt u_n^{\,0}(\,\cdot\,,s) & \quad & \mbox{in }\Om,\ s\ge0,\\
& \wt\eta_n^{\,t}(\,\cdot\,,0)=0 & \quad & \mbox{in }\Om,\ t\ge0,\\
& \wt\psi_n=0 & \quad & \mbox{in }\Ga_1\times\{0\}.
\end{alignedat}\right.
\end{equation}
The energy function associated with the above problem is given by
\begin{align}
\wt\cE_n(t) & :=\f12\int_\Om\rho|\wt u_n^{\,\prime}(t)|^2\,\rd x+\f12\int_\Om k|\nb\wt u_n|^2\,\rd x+\f12\int_0^\infty g(s)\int_\Om a|\nb\wt\eta_n^{\,t}(s)|^2\,\rd x\rd s\nonumber\\
& \quad\;\:-\int_\Om\f{|\al_n\wt u_n|^p\log|\al_n\wt u_n|}{p\al_n} \,\rd x+\int_\Om\f{|\al_n\wt u_n|^p}{p^2\al_n}\,\rd x+\f12\int_{\Ga_1}q(\wt\vp+\wt\psi_n)^2\,\rd S.\label{3.47}
\end{align}
We observe that $\wt\cE_n(t)\ge0$ from \eqref{3.42} for all $n\in\BN$ and all $t\in[0,T]$. Moreover, similarly to \eqref{3.8}, we have
\[
\wt\cE_n^{\,\prime}(t)=-\int_\Om b\f{h(\al_n\wt u_n^{\,\prime})}{\al_n}\wt u_n^{\,\prime}\,\rd x+\f12\int_0^\infty g'(s)\int_\Om a|\nb\wt\eta_n^{\,t}(s)|^2\,\rd x\rd s-\int_{\Ga_1}r(\wt\vp^{\,\prime}+\wt\psi_n^{\,\prime})^2\,\rd S\le0
\]
for all $t\ge0$. Thus,
\begin{align}
\wt\cE_n(0) & =\wt\cE_n(T)+\int_0^T\!\!\!\int_\Om b\f{h(\al_n\wt u_n^{\,\prime})}{\al_n}\wt u_n^{\,\prime}\,\rd x\rd t-\f12\int_0^T\!\!\!\int_0^\infty g'(s)\int_\Om a|\nb\wt\eta_n^{\,t}(s)|^2\,\rd x\rd s\rd t\nonumber\\
& \quad\,+\int_0^T\!\!\!\int_{\Ga_1}r(\wt\vp^{\,\prime}+\wt\psi_n^{\,\prime})^2\,\rd S\rd t.\label{3.49}
\end{align}
Similarly to \eqref{x3.18}, we get
\begin{align}
\lim_{n\to\infty}\Bigg( & \int_0^T\!\!\!\int_\Om b\f{h(\al_n\wt u_n^{\,\prime})}{\al_n}\wt u_n^{\,\prime}\,\rd x\rd t+\int_0^T\!\!\!\int_0^\infty g(s)\int_\Om a|\nb\wt\eta_n^{\,t}(s)|^2\,\rd x\rd s\rd t\nonumber\\
& \left.+\int_0^T\!\!\!\int_{\Ga_1}r(\wt\vp^{\,\prime}+\wt\psi_n^{\,\prime})^2\,\rd S\rd t\right)=0.\label{3.50}
\end{align}
Observing that $\wt\cE_n(0)=1$, \eqref{3.49} and \eqref{3.50}, we infer
\begin{equation}\label{3.51}
\lim_{n\to\infty}\wt\cE_n(T)=1.
\end{equation}

To finish the proof when $\rho$ is a general function, we shall show that the energy $\wt\cE_n(T)$ tends to $0$ uniformly, that is,
\[
\lim_{n\to\infty}\wt\cE_n(T)=0.
\]
Indeed, since $\wt\cE_n(t)\le\wt\cE_n(0)=1$, there exist weakly convergent subsequences of $\{\wt u_n\}$ and $\{\wt\vp+\wt\psi_n\}$ (still denoted in the same way) and the respective weak limits $\wt u$ and $\wt\de$ such that
\begin{equation}\label{3.53}
\begin{alignedat}{2}
\wt u_n & \xrightharpoonup{\ {\rm w.}\ }\wt u & \quad & \mbox{in }L^2(0,T;H_{\Ga_0}^1(\Om)),\\
\wt u_n^{\,\prime} & \xrightharpoonup{\ {\rm w.}\ }\wt u^{\,\prime} & \quad & \mbox{in }L^2(0,T;L^2(\Om)),\\
\wt\vp+\wt\psi_n & \xrightharpoonup{\ {\rm w.}\ }\wt\de & \quad & \mbox{in }L^2(0,T;L^2(\Ga_1)),\\
\wt\vp^{\,\prime}+\wt\psi_n^{\,\prime} & \xrightharpoonup{\ {\rm w.}\ }\wt\de^{\,\prime} & \quad & \mbox{in }L^2(0,T;L^2(\Ga_1))
\end{alignedat}
\end{equation}
as $n\to\infty$. Since $H_{\Ga_0}^1(\Om)\hookrightarrow L^2(\Om)$ compactly, it follows from the Aubin-Lions theorem that
\begin{equation}
\wt u_n\longrightarrow\wt u\quad\mbox{in }L^2(0,T;L^2(\Om)),\ n\to\infty.\label{3.57}
\end{equation}
Moreover, repeating the same argument used to prove \eqref{3.35}, we obtain
\begin{equation}\label{3.58}
\wt\vp+\wt\psi_n\longrightarrow\wt\de\quad\mbox{in }L^2(0,T;L^2(\Ga_1)),\ n\to\infty.
\end{equation}

{\bf Step 2.} Since $\{\al_n\}\subset[0,\infty)$ is bounded, there exists a constant $\al\ge0$ such that
\[
\lim_{n\to\infty}\al_n=\al
\]
possibly upon taking a further subsequence. We shall discuss two sub-cases of $\al>0$ and $\al=0$, and in both cases we aim to showing $\wt u=\wt\de=0$.\medskip

{\bf Case 1.} Let us assume $\al>0$. Since $(u,\de)=(0,0)$, \eqref{3.24} and \eqref{3.35} are equivalent with
\[
\begin{aligned}
\al_n\wt u_n & \longrightarrow0\quad\mbox{in }L^2(0,T;L^2(\Om)),\\
\al_n(\wt\vp+\wt\psi_n) & \longrightarrow0\quad\mbox{in }L^2(0,T;L^2(\Ga_1)),
\end{aligned}\quad n\to\infty
\]
and hence
\begin{align}
\al_n\wt u_n & \longrightarrow0\quad\mbox{a.e.\! in }\Om\times(0,T),\label{3.62}\\
\al_n(\wt\vp+\wt\psi_n) & \longrightarrow0\quad\mbox{a.e.\! in }\Ga_1\times(0,T)\nonumber
\end{align}
as $n\to\infty$. Due to the continuity of $|u|^{p-2}u\log|u|$, \eqref{3.62} implies
\[
f(\al_n\wt u_n)\longrightarrow0\quad\mbox{a.e.\! in }\Om\times(0,T),\ n\to\infty.
\]
Meanwhile, the estimate \eqref{3.27} guarantees the uniform boundedness of $\{|\al_n\wt u_n|^{p-2}\al_n\wt u_n\log|\al_n\wt u_n|\}$ in $L^\infty(0,T;L^2(\Om))$. Then we obtain
\begin{equation}\label{3.64}
f(\al_n\wt u_n)\xrightharpoonup{\ {\rm w.}\ }0\quad\mbox{in }L^2(0,T;L^2(\Om)),\ n\to\infty.
\end{equation}
Similarly to the proof of \eqref{la5}, again we can conclude
\[
\wt u^{\,\prime}\equiv0\quad\mbox{a.e.\! in }\om\times(0,T),\quad\om:=\Om\setminus A.
\]
Then passing $n\to\infty$ in \eqref{x13.36} results in
\begin{equation}\label{3.65}
\begin{cases}
\rho\,\wt u^{\,\prime\prime}-\rdiv(k\nb\wt u\,)=0 & \mbox{in }\Om\times(0,\infty),\\
\wt u=0 & \mbox{on }\Ga_0\times(0,\infty),\\
\pa_\nu^k\wt u=0 & \mbox{on }\Ga_1\times(0,\infty),\\
\wt u^{\,\prime}=0 & \mbox{in }\om\times(0,\infty),
\end{cases}
\end{equation}
from which we arrive at $\wt u=\wt\de=0$ following the same line as that in the case of $(u,\de)\ne(0,0)$.\medskip

{\bf Case 2.} Now it suffices to consider $\al=0$. We estimate
\[
\left|\f{f(\al_n\wt u_n)}{\al_n}\right|=\f{|\al_n\wt u_n|^{p-1}|\log|\al_n\wt u_n||}{\al_n}\le\al_n^{p-2}|\log\al_n||\wt u_n|^{p-1}+\al_n^{p-2}|\wt u_n|^{p-1}|\log|\wt u_n||.
\]
Since $p>2$ by \eqref{1.2}, it is readily seen that
\[
\lim_{n\to\infty}\al_n^{p-2}=\lim_{n\to\infty}\al_n^{p-2}|\log\al_n|=0,
\]
and both $|\wt u_n|^{p-1}$ and $|\wt u_n|^{p-1}|\log|\wt u_n||$ are uniformly bounded in $L^2(0,T;L^2(\Om))$, which indicates
\begin{equation}\label{x3.66}
\f{f(\al_n\wt u_n)}{\al_n}\longrightarrow0\quad\mbox{in }L^2(0,T;L^2(\Om)),\ n\to\infty.
\end{equation}
Therefore, passing $n\to\infty$ in \eqref{x13.36} again leads us to \eqref{3.65} and consequently $\wt u=\wt\de=0$.\medskip

As a result, it turns out that the weak limits $\wt u$ and $\wt\de$ vanish in both cases above, i.e.,
\[
\begin{aligned}
\wt u_n & \xrightharpoonup{\ {\rm w.}\ }0\quad\mbox{in }L^2(0,T;H_{\Ga_0}^1(\Om)),\\
\wt u_n^{\,\prime} & \xrightharpoonup{\ {\rm w.}\ }0\quad\mbox{in }L^2(0,T;L^2(\Om)),\\
\wt\vp+\wt\psi_n & \xrightharpoonup{\ {\rm w.}\ }0\quad\mbox{in }L^2(0,T;L^2(\Ga_1)),\\
\wt\vp^{\,\prime}+\wt\psi_n^{\,\prime} & \xrightharpoonup{\ {\rm w.}\ }0\quad\mbox{in }L^2(0,T;L^2(\Ga_1)),
\end{aligned}\quad n\to\infty.
\]
Moreover, \eqref{3.18} \eqref{3.57} and \eqref{3.58} improve these weak limits to the strong ones as
\[
\begin{aligned}
\wt u_n & \longrightarrow0\quad\mbox{in }L^2(0,T;L^2(\Om)),\\
\wt\vp+\wt\psi_n & \longrightarrow0\quad\mbox{in }L^2(0,T;L^2(\Ga_1)),\\
\wt\vp^{\,\prime}+\wt\psi_n^{\,\prime} & \longrightarrow0\quad\mbox{in }L^2(0,T;L^2(\Ga_1)),\\
\end{aligned}\quad n\to\infty.
\]

{\bf Step 3.} Pick $\ze\in C_0^\infty(0,T)$ and $\chi\in C_0^\infty(\Om)$ such that $\supp\,\chi\subset\Om\setminus A$. Since $a$ has a strictly positive lower bound in $\supp\,\chi$, we deduce
\begin{align*}
\int_\Om(|\chi|+|\nb\chi|)|\wt\eta_n^{\,t}|^2\,\rd x & =\int_{\supp\,\chi}(|\chi|+|\nb\chi|)|\wt\eta_n^{\,t}|^2\,\rd x\le\max_{\supp\,\chi}(|\chi|+|\nb\chi|)\int_{\supp\,\chi}|\wt\eta_n^{\,t}|^2\,\rd x\\
& \le C(\chi,\la_1)\int_{\supp\,\chi}|\nb\wt\eta_n^{\,t}|^2\,\rd x\le C(\chi,\la_1, a)\int_{\supp\,\chi}a|\nb\wt\eta_n^{\,t}|^2\,\rd x,
\end{align*}
where $\la_1$ was the principal eigenvalue of \eqref{eq-eigen}. Similar estimates as above will play an essential role in obtaining the convergence of the following expression. Let us set
\[
\ga_n(x,t):=\int_0^\infty g(s)\wt\eta_n^{\,t}(x,s)\,\rd s.
\]
Multiplying both sides of the first equation in \eqref{x13.36} by $\ga_n\,\chi\,\ze(t)$ and integrating over $\Om\times(0,T)$, we perform integration by parts to derive
\begin{equation}\label{3.75}
\sum_{j=0}^8I_j(n)=0,
\end{equation}
where
\begin{alignat*}{2}
I_0(n) & :=-\int_0^T\!\!\!\int_\Om\rho\,\wt u_n^{\,\prime}\ga_n'\,\chi\,\ze\,\rd x\rd t, & \quad I_1(n) & :=-\int_0^T\!\!\!\int_\Om\rho\,\wt u_n^{\,\prime}\ga_n\,\chi\,\ze'\,\rd x\rd t,\\
I_2(n) & :=\int_0^T\!\!\!\int_\Om k(\nb\wt u_n\cdot\nb\ga_n)\chi\,\ze\,\rd x\rd t, & \quad I_3(n) & :=\int_0^T\!\!\!\int_\Om k(\nb\wt u_n\cdot\nb\chi)\ga_n\,\ze\,\rd x\rd t,\\
I_4(n) & :=\int_0^T\!\!\!\int_0^\infty g\int_\Om a(\nb\wt\eta_n^{\,t}\cdot\nb\ga_n)\chi\,\ze\,\rd x\rd s\rd t, & \quad I_5(n) & :=\int_0^T\!\!\!\int_0^\infty g\int_\Om a(\nb\wt\eta_n^{\,t}\cdot\nb\chi)\ga_n\,\ze(t)\,\rd x\rd s\rd t,\\
I_6(n) & :=\int_0^T\!\!\!\int_\Om b\f{h(\al_n\wt u_n^{\,\prime})}{\al_n}\ga_n\,\chi\,\ze\,\rd x\rd t, & \quad I_7(n) & :=-\int_0^T\!\!\!\int_{\Ga_1}(\wt\vp^{\,\prime}+\wt\psi_n^{\,\prime})\ga_n\,\chi\,\ze\,\rd S\rd t,\\
I_8(n) & :=\int_0^T\!\!\!\int_\Om\f{f(\al_n\wt u_n)}{\al_n}\ga_n\,\chi\,\ze\,\rd x\rd t.
\end{alignat*}

Next, we prove that each term of \eqref{3.75} converges to $0$ as $n\to\infty$. Recalling the properties of the functions $a,\chi,g$, we have
\begin{equation}\label{3.76}
\left|\int_0^T\!\!\!\int_0^\infty g\int_\Om\wt\eta_n^{\,t}\,\chi\,\ze'\,\rd x\rd s\rd t\right|\le C(a,\ze,\chi,\la_1)\int_0^T\!\!\!\int_{\Om\setminus A}\int_0^\infty g\,\rd s\int_0^\infty g\,a|\nb\wt\eta_n^{\,t}|^2\,\rd s\rd x\rd t,
\end{equation}
which, together with \eqref{3.50}, indicates
\[
\lim_{n\to\infty}\int_0^T\!\!\!\int_0^\infty g\int_\Om\wt\eta_n^{\,t}\,\chi\,\ze'\,\rd x\rd s\rd t=0.
\]
Employing H\"older's inequality, \eqref{r2}, \eqref{3.47} and \eqref{3.76}, we deduce
\begin{align*}
|I_1(n)| & \le\int_0^T\!\!\!\int_\Om\rho|\wt u_n^{\,\prime}\ga_n\,\chi\,\ze'|\,\rd x\rd t\le C(\rho,\ze,\chi)\int_0^T\left(\int_\Om\rho|\wt u_n^{\,\prime}|^2\,\rd x\right)^{1/2}\left(\int_\Om\chi|\ga_n|^2\,\rd x\right)^{1/2}\rd t\\
& \le C(\rho,\ze,\chi)\cdot 2(1+\wt C)\sqrt{\wt\cE_n(0)}\int_0^T\left(\int_\Om\chi|\ga_n|^2\,\rd x\right)^{1/2}\rd t\\
& \le2(1+\wt C)\,C(\rho,\ze,\chi)\left(\int_0^T\!\!\!\int_\Om\chi|\ga_n|^2\,\rd x\rd t\right)^{1/2}\\
& \le2(1+\wt C)\,C(a,\rho,\ze,\chi)\left(\int_0^T\!\!\!\int_{\Om\setminus A}\int_0^\infty g\,\rd s\int_0^\infty g\,a|\nb\wt\eta_n^{\,t}|^2\,\rd s\rd x\rd t\right)^{1/2},
\end{align*}
which yield
\begin{equation}\label{x3.77}
\lim_{n\to\infty}I_1(n)=0.
\end{equation}
In the same manner, we conclude
\begin{equation}
\lim_{n\to\infty}I_2(n)=\lim_{n\to\infty}I_3(n)=0.
\end{equation}
Meanwhile, we take into account \eqref{3.50} and Assumption \ref{assum1} to see
\begin{align*}
& \quad\;\left|\int_0^T\!\!\!\int_0^\infty g\int_\Om b\f{h(\al_n\wt u_n^{\,\prime})}{\al_n}\wt\eta_n^{\,t}\,\ga_n\,\chi\,\ze\,\rd x\rd s\rd t\right|\\
& \le C(\ze,\chi,b)\int_0^T\!\!\!\int_0^\infty g\left(\int_\Om b\,\chi|h(\al_n\wt u_n^{\,\prime})|^2\,\rd x\right)^{1/2}\left(\int_\Om|\chi\,\wt\eta_n^{\,t}|^2\,\rd x\right)^{1/2}\rd s\rd t\\
& \le C(a,\ze,\chi,b,g,\al_n)\left(\int_0^T\!\!\!\int_\Om b\,\chi|\wt u_n^{\,\prime}|^2\,\rd x\rd t+\int_0^T\!\!\!\int_0^\infty g\int_\Om a|\nb\wt\eta_n^{\,t}|^2\,\rd x\right).
\end{align*}
As a consequence,
\begin{equation}
\lim_{n\to\infty}I_6(n)=0.
\end{equation}
In the same manner, we notice \eqref{x3.18}, \eqref{3.64} and \eqref{x3.66} to conclude
\begin{equation}\label{3.79}
\lim_{n\to\infty}I_4(n)=\lim_{n\to\infty}I_5(n)=\lim_{n\to\infty}I_7(n)=\lim_{n\to\infty}I_8(n)=0.
\end{equation}

It remains to deal with $I_0(n)$. Since $(\wt\eta_n^{\,t})'=-\pa_s\wt\eta_n^{\,t}+\wt u_n^{\,\prime}$, we have $I_0(n)=J_1(n)+J_2(n)$, where
\[
J_1(n):=-g_0\int_0^T\ze\int_\Om\chi\,\rho|\wt u_n^{\,\prime}|^2\,\rd x\rd t,\quad J_2(n):=-\int_0^T\ze\int_0^\infty g'(s)\int_\Om\chi\,\rho\,\wt\eta_n^{\,t}\,\wt u_n^{\,\prime}\,\rd x\rd s\rd t.
\]
It is not difficult to see $\lim_{n\to\infty}J_2(n)=0$. Therefore, we combine this with \eqref{3.75} and \eqref{x3.77}--\eqref{3.79} to acquire the desired convergence
\begin{align}
\lim_{n\to\infty}\int_0^T\!\!\!\int_\Om\chi\,\ze\,\rho|\wt u_n^{\,\prime}|^2\,\rd x\rd t=-\f1{g_0}\lim_{n\to\infty}J_1(n)=\f1{g_0}\lim_{n\to\infty}\left(J_2(n)+\sum_{j=1}^8I_j(n)\right)=0.\label{3.85}
\end{align}

On the other hand, let us return to the first equation in \eqref{x13.36} again to multiply $\ze\,\chi\,\wt u_n$ and repeat the same procedure as before to obtain
\begin{align*}
& -\int_0^T\ze\int_\Om\chi\,\rho|\wt u_n^{\,\prime}|^2\,\rd x\rd t-\int_0^T\ze'\int_\Om\chi\,\rho\wt u_n^{\,\prime}\wt u_n\,\rd x\rd t+\int_0^T\ze\int_\Om\chi\,k|\nb\wt u_n|^2\,\rd x\rd t\\
& +\int_0^T\ze\int_\Om k(\nb\wt u_n\cdot\nb\chi)\wt u_n\,\rd x\rd t+\int_0^T\ze\int_0^\infty g\int_\Om a(\nb\wt\eta_n^{\,t}\cdot\nb\chi)\wt u_n\,\rd x\rd s\rd t\\
& +\int_0^T\ze\int_\Om\chi\,a\left(\int_0^\infty g\nb\wt\eta_n^{\,t}\,\rd s\right)\cdot\nb\wt u_n\,\rd x\rd t+\int_0^T\ze\int_\Om\chi\,b\f{h(\al_n\wt u_n^{\,\prime})}{\al_n}\wt u_n\,\rd x\rd t\\
& -\int_0^T\ze\int_{\Ga_1}\chi(\wt\vp^{\,\prime}+\wt\psi_n^{\,\prime})\wt u_n\,\rd S\rd t-\int_0^T\ze\int_\Om\chi\f{f(\al_n\wt u_n)}{\al_n}\wt u_n\,\rd x\rd t=0.
\end{align*}
Mimicking the treatments for the previous case, we derive
\begin{equation}\label{3.86}
\lim_{n\to\infty}\int_0^T\int_\Om\ze\,\chi\,k|\nb\wt u_n|^2\,\rd x\rd t=0.
\end{equation}
From the convergence \eqref{3.85}--\eqref{3.86} and the definitions of $\ze$ and $\chi$, we arrive at
\[
\lim_{n\to\infty}\int_0^T\int_\Om\ze\,\chi\left(\rho|\wt u_n^{\,\prime}|^2+k|\nb\wt u_n|^2\right)\,\rd x\rd t=0.
\]
Observe that, we can choose $\ep>0$ arbitrarily small such that $0\le\ze\le1$, $\ze=1$ in $(\ep,T-\ep)$ and $\supp\,\ze\subset(0,T)$. Then the above limits yields
\[
\sqrt\rho\,\wt u_n^{\,\prime},\sqrt k\,\nb\wt u_n\longrightarrow0\quad\mbox{in }L^2(0,T;L^2(\Om\setminus A)),\ n\to\infty.
\]

{\bf Step 4.} Let $\mu$ be the microlocal defect measure associated with $\{\wt u_n\}$ in $H^1((0,T)\times(\Om\setminus A))$. The above convergence and Remark \ref{rrk1} imply that $\mu=0$ in $(0,T)\times(\Om\setminus A)$, that is, $\supp\,\mu\subset(0,T)\times A$.

On the other hand, by the above convergence, $a=0$ in $A$ and \eqref{x13.36}, we find
\[
\square\wt u_n=:\rho\wt u_n^{\,\prime\prime}-\rdiv(k\nb\wt u_n)=-\f{f(\al_n\wt u_n)}{\al_n}-b\f{h(\al_n\wt u_n^{\,\prime})}{\al_n}\quad\mbox{in }A\times(0,T),\quad\forall\,n\in\BN.
\]
Using \eqref{3.50}, \eqref{3.64} and \eqref{x3.66}, we obtain
\[
\square\wt u_n\longrightarrow0\quad\mbox{in }\quad H^{-1}(\Om\times(0,T)),\ n\to\infty.
\]
and hence
\[
\square\wt u_n^{\,\prime},\square(\nb\wt u_n)\longrightarrow0\quad\mbox{in }H^{-2}(\Om\times(0,T)),\ n\to\infty,
\]
which is equivalent with
\[
\supp\,\mu\subset\left\{(t,x,\tau,\xi)\mid\tau^2=\f{k(x)}{\rho(x)}\|\xi\|^2\right\}.
\]
In addition, $\supp\,\mu$ is the union of curves which are the bicharacteristics of the principal symbol
\[
q=\tau^2-\f{k(x)}{\rho(x)}\|\xi\|^2.
\]

Since $T>T_0$, each bicharacteristic ray enters the region $\Om\setminus A$ before the time $T$. Then we obtain $\mu=0$ in $\Om$ and  consequently
\begin{equation}\label{3.92}
\wt u_n^{\,\prime},\nb\wt u_n\longrightarrow0\quad\mbox{in }L^2(\Om\times(0,T)).
\end{equation}

{\bf Step 5.} Now we are in a position to deal with the logarithmic nonlinearity as well as the boundary term. Splitting $\Om$ into $\Om_1,\Om_2$ in the same way as that in \eqref{eq-Omega} and using the Sobolev embedding theorem, we estimate
\begin{align}
\left|\int_\Om\f{|\al_n\wt u_n|^p\log |\al_n\wt u_n|}{\al_np} \,\rd x\right| & \le\left(\int_{\Om_1}+\int_{\Om_2}\right)\f{|\al_n\wt u_n|^p|\log|\al_n\wt u_n||}{\al_n p}\,\rd x\nonumber\\
& \le\f{(\e(p-2))^{-1}\al_n}p\|\wt u_n\|_2^2+\f{(\e\,\ve)^{-1}\al_n^{p-1+\ve}}p\|\wt u_n\|_{p+\ve}^{p+\ve}\nonumber\\
& \le\f{(\e(p-2))^{-1}\al_n B_2^2}p\|\nb\wt u_n\|_2^2+\f{(\e\,\ve)^{-1}\al_n^{p-1+\ve}B_{p+\ve}^{p+\ve}}p\|\nb\wt u_n\|_2^{p+\ve}\nonumber\\
& \le\xi_1\|\nb\wt u_n\|_2^2+\xi_2\|\nb\wt u_n\|_2^2,
\end{align}
where $B_2$, $B_{p+\ve}$ were the optimal constants defined in Lemma \ref{lemma3}, $\Om_1,\Om_2$ are defined similarly as those in \eqref{eq-Omega}, and
\[
\xi_1:=\f{(\e(p-2))^{-1}\al_n B_2^2}p,\quad\xi_2:=\f{(\e\,\ve)^{-1}\al_n^{p-1+\ve}B_{p+\ve}^{p+\ve}}p\left(\f{2(1+\wt C)E(0)}\ell\right)^{\f{p+\ve-2}2}.
\]
Here we also used the inequality $\te^{p-2}\log\te<(\e(p-2))^{-1}$ for $0<\te<1$. Similarly, we have
\begin{equation}
\int_\Om\f{|\al_n\wt u_n|^p}{\al_n p^2}\,\rd x\le\f{\al_n^{p-1}}{p^2}\int_\Om{|\wt u_n|^p}\,\rd x\le\f{\al_n^{p-1}B_p^p}{p^2}\|\nb\wt u_n\|_2^p\le\xi_3\|\nb\wt u_n\|_2^2
\end{equation}
with
\[
\xi_3:=\f{\al_n^{p-1}B_p^p}{p^2}\left(\f{2(1+\wt C)E(0)}\ell\right)^{\f{p-2}2}.
\]

Next, we deal with the boundary term. Multiplying both sides of the fifth equation in \eqref{x13.36} by $\ze (\wt\vp+\wt\psi_n)$ with $\ze\in C_0^\infty(0,T)$ chosen in Step 3 and integrating over $\Ga_1\times(0,T)$, we have
\begin{align*}
& \int_0^T\ze\int_{\Ga_1}\wt u_n^{\,\prime}(\wt\vp+\wt\psi_n)\,\rd S\rd t+\int_0^T\ze\int_{\Ga_1}q(\wt\vp+\wt\psi_n)^2\,\rd S\rd t+\int_0^T\ze\int_{\Ga_1}r(\wt\vp^{\,\prime}+\wt\psi_n^{\,\prime})(\wt\vp+\wt\psi_n)\,\rd S\rd t=0.
\end{align*}
Performing the integration by parts for the first term yields
\[
\int_0^T\ze\int_{\Ga_1}\wt u_n^{\,\prime}(\wt\vp+\wt\psi_n)\,\rd S\rd t=-\int_0^T\ze\int_{\Ga_1}\wt u_n(\wt\vp^{\,\prime}+\wt\psi_n^{\,\prime})\,\rd S\rd t-\int_0^T\ze'\int_{\Ga_1}\wt u_n(\wt\vp+\wt\psi_n)\,\rd S\rd t.
\]
Then we have
\begin{align*}
\int_0^T\ze\int_{\Ga_1}q(\wt\vp+\wt\psi_n)^2\,\rd S\rd t & =-\int_0^T\ze\int_{\Ga_1}r(\wt\vp^{\,\prime}+\wt\psi_n^{\,\prime})(\wt\vp+\wt\psi_n)\,\rd S\rd t+\int_0^T\ze\int_{\Ga_1}\wt u_n(\wt\vp^{\,\prime}+\wt\psi_n^{\,\prime})\,\rd S\rd t\\
& \quad\,+\int_0^T\ze'\int_{\Ga_1}\wt u_n(\wt\vp+\wt\psi_n)\,\rd S\rd t.
\end{align*}
Owing to the choice of $\ze$ and the strict positivity of $q$, we deduce
\[
\int_\ep^{T-\ep}\int_{\Ga_1}(\wt\vp+\wt\psi_n)^2\,\rd S\rd t\longrightarrow0,\quad n\to\infty.
\]
Since $\ep>0$ was chosen arbitrarily small, we conclude that
\begin{equation}\label{3.97}
\int_0^T\!\!\!\int_{\Ga_1}(\wt\vp+\wt\psi_n)^2\,\rd S\rd t\longrightarrow0,\quad n\to\infty.
\end{equation}

Finally, we integrate \eqref{3.47} from $0$ to $T$ and take advantage of \eqref{3.50} and \eqref{3.92}--\eqref{3.97} to derive
\begin{align*}
\int_0^T\wt\cE_n(t)\,\rd t & \le\int_0^T\!\!\!\int_\Om\rho|\wt u_n^{\,\prime}|^2\,\rd x\rd t+\int_0^T\!\!\!\int_0^\infty g\int_\Om a|\nb\wt\eta_n^{\,t}|^2\,\rd x\rd s\rd t\\
& \quad\,+\int_0^T(1+\xi_1+\xi_2+\xi_3)\int_\Om|\nb\wt u_n|^2\,\rd x\rd t+\int_0^T\!\!\!\int_{\Ga_1}q(\wt\vp+\wt\psi_n)^2\,\rd S\rd t\longrightarrow0
\end{align*}
as $n\to\infty$. Thanks to the monotone decreasing of $\wt\cE_n$, we infer
\[
\int_0^T\wt\cE_n(t)\,\rd t\ge T\wt\cE_n(T).
\]
and eventually $\lim_{n\to\infty}\wt\cE_n(T)=0$, which contradicts with \eqref{3.51}. Hence we can conclude that \eqref{3.11} holds and so does \eqref{3.4}. The proof of Theorem \ref{lem1} is completed.\medskip

We close this article by mentioning that Corollary \ref{co1} can be proved analogously as above, so that we refrain from providing a detailed proof here. The interested readers are referred to \cite{c3} for further details.

\section*{Acknowledgements}

The authors thank the anonymous referees for valuable comments. The first author is supported by China Scholarship Council. The second author is supported by JSPS KAKENHI Grant Numbers JP22K13954, JP23KK0049 and Guangdong Basic and Applied Basic Research Foundation (No.\! 2025A1515012248).


\end{document}